\numberwithin{equation}{section}
\newtheorem{thm}{Theorem}[section]
\newtheorem{lem}[thm]{Lemma}
\newtheorem{rem}{Remark}[section]
\newtheorem{example}[thm]{Example}
\newcommand{\eq}[1]{(\ref{#1})}
\renewcommand{\Re}{\operatorname{\rm Re}}
\renewcommand{\Im}{\operatorname{\rm Im}}
\newcommand{\beqast}{\begin{eqnarray*}}
\newcommand{\eqast}{\end{eqnarray*}}
\newcommand{\beqa}{\begin{eqnarray}}
\newcommand{\eqa}{\end{eqnarray}}
\newcommand{\bbe}{\begin{equation}}
\newcommand{\ee}{\end{equation}}
\renewcommand{\Re}{\operatorname{\rm Re}}
\renewcommand{\Im}{\operatorname{\rm Im}}
\newcommand{\bR}{{\mathbb R}}
\newcommand{\bT}{{\mathbb T}}
\newcommand{\bZ}{{\mathbb Z}}
\newcommand{\cD}{{\mathcal D}}
\newcommand{\cL}{{\mathcal L}}
\newcommand{\cC}{{\mathcal C}}
\newcommand{\cU}{{\mathcal U}}
\newcommand{\cZ}{{\mathcal Z}}
\newcommand{\tV}{{\tilde V}}
\newcommand{\tu}{{\tilde u}}
\newcommand{\tv}{{\tilde v}}
\newcommand{\al}{\alpha}
\newcommand{\be}{\beta}
\newcommand{\de}{\delta}
\newcommand{\eps}{\epsilon}
\newcommand{\ka}{\kappa}
\newcommand{\la}{\lambda}
\newcommand{\La}{\Lambda}
\newcommand{\sg}{\sigma}
\newcommand{\om}{\omega}
\newcommand{\ze}{\zeta}
\newcommand{\ga}{\gamma}
\newcommand{\gap}{\gamma_+}
\newcommand{\gam}{\gamma_-}
\newcommand{\Ga}{\Gamma}
\begin{document}

\title[Efficient inverse $Z$-transform and Wiener-Hopf factorization]
{Efficient inverse $Z$-transform and Wiener-Hopf factorization}
\author[
Svetlana Boyarchenko and
Sergei Levendorski\u{i}]
{
Svetlana Boyarchenko and
Sergei Levendorski\u{i}}

\begin{abstract}
We suggest new closely related methods for numerical inversion of $Z$-transform
and Wiener-Hopf factorization of functions on the unit circle, based on sinh-deformations of the contours of
integration, corresponding changes of variables and  the simplified trapezoid rule.
As applications, we consider evaluation of high moments of probability distributions and 
 construction of causal filters.
Programs in Matlab running on a Mac with moderate characteristics achieves the precision E-14 in several dozen of microseconds and E-11 in several milliseconds, respectively. 
\end{abstract}

\thanks{
\emph{S.B.:} Department of Economics, The
University of Texas at Austin, 2225 Speedway Stop C3100, Austin,
TX 78712--0301, {\tt sboyarch@utexas.edu} \\
\emph{S.L.:}
Calico Science Consulting. Austin, TX.
 Email address: {\tt
levendorskii@gmail.com}}

\maketitle

\noindent
{\sc Key words:} $Z$-transform, high order moments, Wiener-Hopf factorization, spectral filtering, 
conformal acceleration, sinh-acceleration.

\noindent
{\sc MSC2020 codes:} 60-08,42A38,42B10,44A10,65R10,65G51,91G20,91G60


\section{Introduction} 
The Fourier and Laplace transforms, their discrete analogs and Wiener-Hopf factorization 
 are widely used
in various fields of mathematics, natural sciences, engineering, statistics, finance and economics. 
In many important cases of interest, numerical realizations of the integrals in the formulas for the Fourier/Laplace inversion 
and Wiener-Hopf factors are far from trivial due to high oscillation and/or slow decay at infinity of the integrands.
Additional difficulties arise if the integrands are not smooth as in the formulas for probability distributions
in stable L\'evy models and transfer functions in causal filtering of highly persistent shocks.
In the result, many popular methods, FFT in particular, produce serious errors and/or are extremely time consuming.
 Examples of typical errors in the context of evaluation of probability distributions
and option pricing can be found in \cite{iFT,pitfalls,SINHregular}.
Conformal deformations of the contours of integration alleviate this problem. See, e.g.,
\cite{Talbot79,Fedoryuk,stenger-book,AbWh92OR,AbWh06,AbateValko04,TrefethenWeidmanSchmelzer,TrefethenWeidman07,TrefethenWeidmanTrapezoid14} and the bibliographies therein. 
After an appropriate contour deformation, very efficient quadratures such as  Gauss
and Glenshaw-Curtis quadratures are applied.   
The weights and nodes must be accurately
precalculated, and the weights can be rather large. To avoid large rounding errors,  the integrands must be evaluated very accurately. In a number of situations of interest, the integrands are expressed in terms of special functions or evaluated using the Wiener-Hopf technique, hence, sufficiently accurate evaluation is difficult and time
consuming; high precision arithmetic might be needed. Finally,  
 generalizations of the constructions to new integrals and multi-dimensional integrals are far from straightforward.
 
 In the present paper, we apply the sinh-acceleration and other conformal accelerations to the inverse $Z$-transform and Wiener-Hopf factorization of functions
on the unit circle $\bT$. In different fields of science,  two versions of the $Z$-transform of a series $\{u_n\}_{n\in \bZ_+}$ are used:
$\tu(z)=\sum_{n\in \bZ_+}u_n z^{-n}$ and $\tu(z)=\sum_{n\in \bZ_+}u_n z^{n}$.
We use the latter version in Sect. \ref{s:invZ}-\ref{s:sinh_II}, and the former in applications to signal processing in Section
\ref{s:ZWHFfilter}. Replacing $\bZ_+$ with $\bZ$, one obtains the two-sided $Z$ transform.
 In  examples in Sect.  \ref{s:invZ}-\ref{s:sinh_II}, we   calculate  moments of probability distributions, which is a standard tool for identification of
 the latter (see, e.g, \cite{Feller_book, StoyanovCounterExample,ChoudLucanton,KyrBrignFusai23}). The key properties 
 that we use are
 the decay of the moment-generating function in a cone around $(-\infty,0]$ or around the imaginary axis; the efficiency of the numerical schemes increases with the opening angle of the cone. As it is demonstrated in \cite{SINHregular,EfficientAmenable}, wide classes of popular distributions enjoy  one of these properties. In examples, we
 use the distributions of KoBoL processes constructed in \cite{KoBoL}.
 If the distribution has atoms, then the sinh-acceleration is not  applicable but a different family of conformal maps can be used. In Sect.~\ref{s:ZWHFfilter}, we consider the Wiener-Hopf factorization of functions on a circle in applications to causal filtering
 (see, e.g., \cite{BrownHwang96,GasWit98,StoicaMosesSpectrAnalSignals05}),
 and explain how the sinh-deformation of the contour of integration in the formulas for the Wiener-Hopf factors
decreases the complexity of the numerical scheme and facilitates the application of efficient inverse $Z$-transform
to the calculation of transfer functions.

The paper is a natural extension of a series of papers
  \cite{SINHregular,Contrarian,EfficientStableLevyExtremum,EfficientDoubleBarrier2,Joint-3}, where we used  simple families of sinh-deformations and the corresponding conformal change of variables, in two versions: $\xi=i\om_1+b\sinh(i\om+y)$ and $z=\sg_\ell+ib_\ell\sinh(i\om_\ell+y)$ for the Fourier and Laplace inversion, respectively. In the new variables, the integrands
are analytic in  strips around the real axis, hence,  
the  simplified trapezoid rule is efficient. In the one-dimensional (1D) case, the rate of convergence of the resulting numerical scheme 
is  worse than the rates of convergence in \cite{SchmelzerTrefethen07,TrefethenWeidmanSchmelzer,TrefethenWeidman07,TrefethenWeidmanTrapezoid14}.
 However, the sinh-deformation technique is easier to apply to complicated integrals arising in probability and finance,
 and no precalculation of the nodes and (large) weights is necessary. Typically, it is possible to choose a deformation
 such that the integrand is not very large, and decays  exponentially or faster at infinity. Hence, the discretization and truncation errors are fairly easy to control. 
The general scheme for the choice of appropriate sinh-deformations, number of steps and step sizes
has been successfully applied to the Fourier-Laplace inversions in dimensions 1-5, which is a rather challenging
task for the methods that are most efficient in 1D; the integrands are expressed in terms
of the Wiener-Hopf factors, and the latter are calculated using the same family of deformations. 
With respect to some variables, the Gaver-Whynn-Rho algorithm for the Laplace inversion is used.
The modification of the same scheme is successfully applied to the evaluation of probability distributions, special functions and pricing exotic options in stable L\'evy models in \cite{ConfAccelerationStable,EfficientLevyExtremum}. Instead of  
the sinh-deformations, appropriate rotations of the axis of integration and exponential changes of variables
are used; in some cases such as non-symmetric stable L\'evy models of index $\al=1$ or close to 1,
additional families of conformal deformations are used.

We remind to the reader basic formulas, error bounds and recommendations for the choice of the number of terms in the trapezoid rule in Sect. \ref{s:invZ}
(more involved and detailed error bounds and recommendations can be found in \cite{ChoudLucanton}). In Sect.~
\ref{s:sinh_I}-\ref{s:sinh_II}, 
 we construct several versions of the numerical scheme for the inverse $Z$-transform, introduced in \cite{EfficientDiscExtremum}.
  We produce examples to demonstrate why several versions are needed, and outline additional useful modifications in Sect. \ref{s:concl}.  The Wiener-Hopf factorization with applications to filtering of highly persistent shocks are in
  Sect.~\ref{s:ZWHFfilter}. Sect. \ref{s:concl} concludes.

\section{Inverse $Z$-tranform and trapezoid rule}\label{s:invZ}
Let $u\in l_1(\bZ)$. Then $\tu$ is continuous function on $\bT$, and
the inverse $Z$ transform is given by 
\bbe\label{eq:invZ}
u_n=\frac{1}{2\pi i}\int_{\bT}\tu(z)z^{-n-1}dz.
\ee
Fix $n$, define
$h(z)=\tu(z)z^{-n}$, denote by $I(h)$ the RHS of \eq{eq:invZ}, and approximate
  $I(h)$ by 
\bbe\label{defTM}
T_N(h) = (1/N) \sum_{k=0}^{N-1} h(\zeta_N^k),
\ee
where $N>1$ is an integer, and $\zeta_N = \exp(2\pi i/N)$ is the standard primitive $N$-th root of unity.
 If $h$ admits analytic continuation to an annulus $\cD{(a_-,a_+)}:=\{z\ |\ a_-<|z|<a_+\}$, where $0\le a_-<a_+\le +\infty$, the trapezoid rule converges exponentially. See \cite{ChoudLucanton} for various versions of the error bounds;
 we use the simplest one:
\begin{thm}\label{disctraperror}
Let $h$ be analytic in  $\cD_{(1/\rho,\rho)}$, where $\rho>1$, and let 
the Hardy norm of $h$
\[
\|h\|_{\cD_{(1/\rho,\rho)}}=\frac{1}{2\pi i}\int_{|z|=1/\rho}|h(z)|\frac{dz}{z}+\frac{1}{2\pi i}\int_{|z|=\rho}|h(z)|\frac{dz}{z}
\]
be finite. 
Then 
\bbe\label{errtrapgen}
|T_N(h)-I(h)|\le \frac{\rho^{-N}}{1-\rho^{-N}}\|h\|_{\cD_{(1/\rho,\rho)}}.
\ee
 \end{thm}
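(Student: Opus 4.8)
The plan is to reduce everything to the Laurent expansion of $h$ on the annulus and to read off both $I(h)$ and $T_N(h)$ as sums of Laurent coefficients. Since $h$ is analytic in $\cD_{(1/\rho,\rho)}$, it has a Laurent series $h(z)=\sum_{m\in\bZ}c_m z^m$ converging absolutely and uniformly on compact subsets of the open annulus, hence in particular on the unit circle $\bT$ (which lies strictly inside, as $1/\rho<1<\rho$). Because $h(z)=\tu(z)z^{-n}$ and $I(h)$ is the right-hand side of \eq{eq:invZ}, I would first record that $I(h)=\frac{1}{2\pi i}\int_{\bT}h(z)\,dz/z=c_0$: the exact integral simply extracts the constant Laurent coefficient, using $\frac{1}{2\pi i}\int_{\bT}z^{m}\,dz/z=0$ for $m\neq0$ and $=1$ for $m=0$.

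Next I would evaluate $T_N(h)$ by root-of-unity filtering. Inserting the Laurent series into \eq{defTM} and interchanging the finite average over $k$ with the absolutely convergent sum over $m$ gives $T_N(h)=\sum_{m\in\bZ}c_m\big(\frac1N\sum_{k=0}^{N-1}\zeta_N^{km}\big)$. The inner sum is a geometric sum equal to $1$ when $N\mid m$ and to $0$ otherwise, so $T_N(h)=\sum_{j\in\bZ}c_{jN}$. Subtracting the two identities produces the aliasing formula
\[
T_N(h)-I(h)=\sum_{j\neq0}c_{jN}=\sum_{j\ge1}\big(c_{jN}+c_{-jN}\big),
\]
which isolates exactly the coefficients the $N$-point rule fails to annihilate.

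It then remains to bound these coefficients by Cauchy's estimate on the two bounding circles. For $m\ge1$ I would write $c_m=\frac{1}{2\pi i}\int_{|z|=r}h(z)z^{-m-1}dz$ on an interior circle $|z|=r$ with $r<\rho$, obtaining $|c_m|\le r^{-m}M(r)$, where $M(r)=\frac{1}{2\pi}\int_0^{2\pi}|h(re^{i\theta})|\,d\theta$; letting $r\to\rho^-$ gives $|c_m|\le\rho^{-m}\,M(\rho)$, and $M(\rho)$ is precisely the outer term of the Hardy norm. Symmetrically, for $m\le-1$ the circle $|z|=1/\rho$ yields $|c_m|\le\rho^{m}\,M(1/\rho)$. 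Applying these with $m=\pm jN$ and summing the geometric series $\sum_{j\ge1}\rho^{-jN}=\rho^{-N}/(1-\rho^{-N})$ (finite since $\rho^{-N}<1$) gives
\[
|T_N(h)-I(h)|\le\frac{\rho^{-N}}{1-\rho^{-N}}\big(M(\rho)+M(1/\rho)\big)=\frac{\rho^{-N}}{1-\rho^{-N}}\,\|h\|_{\cD_{(1/\rho,\rho)}},
\]
which is the claimed bound.

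The only delicate point, and the step I would treat most carefully, is that the Hardy norm lives on the boundary circles $|z|=\rho$ and $|z|=1/\rho$, which are not inside the open annulus. I would therefore prove the coefficient bounds on interior circles and pass to the limit: since $|h|$ is subharmonic, $M(r)$ is a convex function of $\log r$, so its one-sided limits as $r\to\rho^-$ and $r\to(1/\rho)^+$ exist in $[0,+\infty]$ and are finite exactly when the Hardy norm is finite, which holds by hypothesis; passing to the limit in $|c_m|\le r^{-m}M(r)$ is then immediate. The interchange of summation and the absolute convergence of the Laurent series on $\bT$ are routine consequences of uniform convergence on compacta and require no further hypotheses.
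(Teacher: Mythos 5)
Your proof is correct. The paper itself gives no proof of Theorem \ref{disctraperror} --- it imports the bound from \cite{ChoudLucanton} --- and your argument (Laurent expansion, root-of-unity filtering yielding the aliasing identity $T_N(h)-I(h)=\sum_{j\neq 0}c_{jN}$, Cauchy estimates with $r\to\rho^-$ and $r\to(1/\rho)^+$, then geometric summation) is exactly the standard derivation behind that citation, with the limit passage to the boundary circles via convexity of the means $M(r)$ in $\log r$ handled properly.
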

To satisfy a small error tolerance $\eps>0$, it is necessary to choose $N=N(\eps,n)$ and $\rho>1$ so that $\rho^{-N}$ is small, hence,
we may use an approximate bound
\bbe\label{errtrapgen_app}
|T_N(h)-I(h)|\le \rho^{-N}\|h\|_{\cD_{(1/\rho,\rho)}}.
\ee 
\begin{rem}\label{rem:trap}
{\rm 
\begin{enumerate}[(1)]
\item
If either $1\ge a_-<a_+$ or $0\le a_-<a_+\le 1$, then the rescaling $z=rz'$ with an appropriate $r>1$ and $r\in (0,1)$, respectively,
can be used to reduce to the case $0\le a_-<1<a_+$. 
\item
If either $a_-=0$ or close to 0 or $a_+=\infty$ or very large, then the rescaling can be used to reduce to the case when a large $\rho$ can be chosen. However, then, in the case of large $n$, the Hardy norm is very large. Thus, one is forced to use $\rho$ close to 1.
\item
 If analytic continuation to an annulus containing $\bT$ is impossible,
 then only  real-analytical error bounds are applicable,
and the rate of convergence of the trapezoid rule is very poor.
\item
If $a_+/a_--1$ is small, then, after an approximately optimal rescaling, we can reduce to the case $(a_-,a_+)=(1/\rho,\rho)$,
where $\rho=\sqrt{a_+/a_-}$ is close to 1: $\rho=e^\de$, where $0<\de<<1$. Assume for simplicity that 
$\|\tu\|_{\cD_{(1/\rho,\rho)}}<\infty$ (if not, one chooses a smaller $\rho$). Then, as $N\to\infty$,
 the RHS of the error bound \eq{errtrapgen_app} is of the order of $e^{(n-N)\de}$. Hence, to satisfy a small error tolerance
 $\eps$, one is forced to use a large $N\approx n+(1/\de)E$, where $E=\ln(1/\eps)$.
 If $N$ is large, then, to avoid rounding errors, it can be necessary to evaluate $\tu(z)$ with high precision. This is especially time consuming
 if $\tu$ is given by complicated expressions in terms of special functions as in \cite{KyrBrignFusai23} where the moments of probability distributions are calculated or evaluated using the Wiener-Hopf factorization
 technique as in \cite{FusaiBarr,FusaiGermanoMarazzina,CernyKyriakou, EfficientDiscExtremum} where exotic options are priced. In the latter case, for each value of $z$ used in the trapezoid rule, $\tu(z)$ is evaluated in terms
 of double integrals. The integrands are expressed in terms of the Wiener-Hopf factors, which are 
expressed in terms of certain integrals. The evaluation of the latter is time-consuming.
\end{enumerate} }\end{rem}
In the case of the (one-sided) $Z$-transform, only  $n\ge 0$ need to be considered. In the 
general case, we assume that $n\ge 0$. The case $n<0$ reduces to the case $n>0$ by changing the variable
$z\mapsto 1/z$. 
Assume that 
$\tu$ is analytic in the interior of $\bT$, and the interior is the maximal disc of analyticity. 
Hence, we use \eq{eq:invZ} with integration over $\{z\ |\ |z|=r\}$, where $r\in (0,1)$. Changing the variable
$z\mapsto rz$ and letting $\tu_r(z)=\tu(rz)r^{-n-1}$, we obtain
\bbe\label{eq:invZr}
u_n=\frac{1}{2\pi i}\int_{\bT}\tu_r(z)z^{-n-1}dz.
\ee
We take  $\rho\in (1,1/r)$, and apply the bound \eq{errtrapgen} with $h_r(z)=\tu_r(z)z^{-n}$. The Hardy norm is
\[
\|h\|_{\cD(r_-,r_+)}=\frac{1}{2\pi i}\int_{|z|=1/\rho}|\tu_r(z)z^{-n}|\frac{dz}{z}+\frac{1}{2\pi i}\int_{|z|=\rho}|\tu_r(z)z^{-n}|\frac{dz}{z},
\]
where $r_-:=r/\rho<r<r_+:=r\rho<1$. The choice of $r\in (0,1)$ being arbitrary, we can use arbitrarily large $\rho$.
However, if $\rho$ is large, one has to multiply very small numbers by very large ones, and high precision arithmetic is necessary to avoid large rounding errors. 

We make a realistic assumption that one can  evaluate the terms in
  the trapezoid rule sufficiently accurately only if the terms are not too large. We impose the condition
  on the admissible size of the terms in the form $r^{-n}\le e^M$. The following  approximation $N_{appr}=N_{appr}(\eps,n,M)$ to $N=N(\eps,n,M)$ in terms of $E=\ln(1/\eps), n$ and $M$ \footnote{We write $N(\eps,n,M)\approx N_{appr}(\eps,n,M)$ if there exist $c,C>0$ independent of $(\eps,n,M)$
  such that $cN_{appr}(\eps,n,M)\le N(\eps,n,M)\le CN_{appr}(\eps,n,M)$.} is derived in \cite{EfficientZsufficient}.
    \begin{lem}\label{lem:lower_bound_comp_trap}
  Let there exist $C_0>c_0>0$ such that 
  \bbe\label{eq:bound_main_trap}
  c_0|1-z|^{-1}\le |\tu(z)|\le C_0|1-z|^{-1}, \ z\in \cD(0,1),
  \ee
  and let $r^{-n}= e^M$, where $M$ is independent of $n$. 
  
  Then, if $n>>1$, $\eps<<1,$  $n>> E>> \ln n$ and $E>>\ln M$,
  \bbe\label{Neps_n_M} 
  N(=N(\eps,n,M))\approx N_{appr}:= \frac{n}{M}(E+2M).
  \ee
  \end{lem}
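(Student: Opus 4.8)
\emph{Strategy.} Because the trapezoid sum $T_N(h_r)$ depends only on $r$ and $N$, while $\rho$ enters solely through the a priori estimate of \theor{disctraperror}, I read $N(\eps,n,M)$ as the least number of terms for which that estimate can be driven below $\eps$ after optimising the free radius $\rho$. The constraint $r^{-n}=e^M$ pins down $r=e^{-M/n}$, the smallest radius the term-size budget allows and hence the one giving the widest admissible annulus. I parametrise the radius by $\beta=n\ln\rho/M\in(0,1)$, i.e. $\rho=e^{\beta M/n}$, where $\beta<1$ is exactly the condition that the outer radius $r_+=r\rho=e^{(\beta-1)M/n}$ stay below $1$ and inside the disc of analyticity of $\tu$. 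Everything then reduces to a two-sided estimate of $\|h_r\|_{\cD_{(1/\rho,\rho)}}$ as a function of $\beta$, followed by a one-variable optimisation.

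\emph{The Hardy norm.} The norm is the sum of the integrals over $|z|=1/\rho$ and $|z|=\rho$, which carry the weights $r^{-n}\rho^{n}=e^{(1+\beta)M}$ and $r^{-n}\rho^{-n}=e^{(1-\beta)M}$, respectively. On the inner contour the original argument is $r_-=r/\rho$ with $1-r_-\asymp M/n$, and \eq{eq:bound_main_trap} together with the elementary identity $|1-\tau e^{i\theta}|^2=(1-\tau)^2+4\tau\sin^2(\theta/2)$, which gives $\int_0^{2\pi}|1-\tau e^{i\theta}|^{-1}d\theta\asymp\ln\frac{1}{1-\tau}$ as $\tau\to1^-$, shows the remaining contour factor to be $\asymp\ln(n/M)$. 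The outer factor is of the same logarithmic order (at worst enlarged by $\ln\frac{1}{1-\beta}$ as $r_+\to1$), but its weight $e^{(1-\beta)M}$ is exponentially smaller, so the inner contour dominates and $\|h_r\|_{\cD_{(1/\rho,\rho)}}\asymp e^{(1+\beta)M}\ln(n/M)$, the matching upper and lower constants coming from the two sides of \eq{eq:bound_main_trap}.

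\emph{Optimising and reading off $N$.} Feeding this into \eq{errtrapgen_app}, the certified error is $\asymp e^{-\beta MN/n}e^{(1+\beta)M}\ln(n/M)$; setting it equal to $\eps$ and taking logarithms gives, for a fixed admissible $\rho$, $N(\rho)\asymp\frac{n(1+\beta)}{\beta}+\frac{n}{\beta M}\bigl(E+\ln\ln(n/M)\bigr)$, which is strictly decreasing in $\beta$. For the upper bound I take the explicit choice $\beta=1-1/n$ (so the stray $\ln\frac{1}{1-\beta}=\ln n$ is harmless) and check directly that \eq{errtrapgen_app} drops below $\eps$ once $N=\lceil N_{appr}\rceil$, which yields $N\le N_{appr}(1+o(1))$. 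For the lower bound I use only the \emph{lower} estimate in \eq{eq:bound_main_trap}, which forces $\|h_r\|_{\cD_{(1/\rho,\rho)}}\ge c\,e^{(1+\beta)M}$ for every admissible $\rho$; then any $N$ for which the bound certifies accuracy $\eps$ must satisfy $N\ge\frac{n(1+\beta)}{\beta}+\frac{nE}{\beta M}-o(N_{appr})\ge\inf_{0<\beta<1}\bigl(\cdots\bigr)=2n+\frac{nE}{M}-o(N_{appr})=N_{appr}(1-o(1))$. The corrections are genuinely lower order because $E\gg\ln n\ge\ln\ln(n/M)$ and $E\gg\ln M$, so together the two inequalities give $c\,N_{appr}\le N\le C\,N_{appr}$, which is \eq{Neps_n_M}.

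\emph{Main obstacle.} The delicate point is the near-singular behaviour of the two contour integrals as $r_+\to1$: one must confirm that the logarithmic blow-up $\ln\frac{1}{1-r_+}$ never exceeds order $\ln(n/M)+\ln\frac{1}{1-\beta}$ and is therefore swallowed by the lower-order terms under the standing hypotheses, and that the inner weight $e^{(1+\beta)M}$ dominates the outer weight $e^{(1-\beta)M}$ throughout $\beta\in(0,1)$, so that the optimisation is governed by the clean exponent $(1+\beta)M$. Controlling these logarithms simultaneously from above, for achievability, and from below, for necessity, is what makes both inequalities in \eq{Neps_n_M} hold with the same $N_{appr}$; the factor $2M=\ln(r^{-n}\rho^{n})\big|_{\beta=1}$ is precisely the contribution of the inner contour pressed against the singularity.
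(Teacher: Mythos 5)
Your proposal is correct and takes the route the paper intends: the paper gives no in-text proof of Lemma \ref{lem:lower_bound_comp_trap} (it defers to \cite{EfficientZsufficient}, whose very title ``sufficient conditions'' confirms your reading of $N(\eps,n,M)$ as the number of terms certified by the annulus bound \eq{errtrapgen_app} under the term-size budget $r^{-n}=e^{M}$), and your two-sided Hardy-norm estimate $\asymp e^{(1+\beta)M}\ln(n/M)$ with the monotone optimisation over $\beta=n\ln\rho/M$, $\beta\to 1^-$ yielding $2n+nE/M$, is precisely the computation sketched in Remark \ref{rem:trap}(4) and the paragraph preceding the lemma, with the hypotheses $E\gg\ln n$ and $E\gg\ln M$ used exactly where you invoke them to absorb the $\ln\ln(n/M)$ and outer-contour logarithms. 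The one caveat worth recording is that your lower bound constrains the certified $N$ rather than the minimal $N$ for the true error, which is the correct interpretation in this framework; reassuringly, the aliasing identity $T_N(h_r)-I(h_r)=\sum_{k\neq 0}r^{kN-1}u_{n+kN}$ shows the true-error reading would give the same order, since the $k=-1$ term (of size $\sim e^{MN/n}$) generically forces $N>n$ while the $k\ge 1$ tail forces $r^{N}\lesssim\eps$, reproducing $N\asymp\max(n,\,nE/M)\asymp N_{appr}$.
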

 For moderately large $M'$s and very large $n$, $N$ is very large.
  
  \section{Sinh-acceleration I}\label{s:sinh_I} \subsection{General formulas and bounds}\label{ss:sinhIgen}
We
deform the contour of integration $\{z=e^{i\varphi}\ |\ -\pi<\varphi<\pi\}$ into a contour of the form
$\cL_{L; \sg_\ell,b_\ell,\om_\ell}=\chi_{L; \sg_\ell,b_\ell,\om_\ell}(\bR)$: 
 \bbe\label{eq:invZsinh}
u_n=\frac{1}{2\pi i}\int_{\cL_{L; \sg_\ell,b_\ell,\om_\ell}}\tu(z)z^{-n-1}dz,
\ee
where $b_\ell>0$, $\sg_\ell\in\bR$, $\om_\ell\in (-\pi/2,\pi/2)$, and the map $\chi_{L; \sg_\ell,b_\ell,\om_\ell}$
is defined by \bbe\label{eq:sinhLapl}
\chi_{L; \sg_\ell,b_\ell,\om_\ell}(y)=\sg_\ell +i b_\ell\sinh(i\om_\ell+y).
\ee 
  We make the change of variables
 \bbe\label{izeT0sinh}
u_n=\int_{\bR}\frac{b_\ell}{2\pi}\chi_{L; \sg_\ell,b_\ell,\om_\ell}(y)^{-n-1}\cosh(i\om_\ell+y)\tu(\chi_{L; \sg_\ell,b_\ell,\om_\ell}(y))dy,
\ee
denote by $f_n(y)$ the integrand on the RHS of \eq{izeT0sinh}, apply the infinite trapezoid rule
\bbe\label{Vn_inf_sinh}
u_n\approx \ze_\ell \sum_{j\in \bZ}f_n(j\ze_\ell),
\ee
and truncate the series replacing $\sum_{j\in \bZ}$ with $\sum_{|j|\le N}$. 
If $u_n$ are real, then the number of terms of the simplified trapezoid rule can be decreased almost two-fold.
The deformation is possible and a simple error bound is available under certain conditions on
 the domain of analyticity and  rate of decay of $\tu(z)$ as $z\to \infty$.  For $\al\in(0,\pi)$, denote 
 $\cC_{\al}=\{\rho e^{i\varphi}\ |\ |\varphi|<\al, \rho>0\}$.
 
\vskip0.1cm
\noindent
  {\sc Condition $Z$-SINH1$(a_-,a_+;\al)$.}  
 There exist $0\le a_-<1\le a_+$ and  $\al\in (0,\pi)$  such that
 \begin{enumerate}[(a)] \item 
 $\tu$ admits analytic continuation
 to
  \[
  \cU(a_-,a_+,\al):=\cD(a_-,a_+)\cup((a_+-\cC_{\al})\setminus
 \{z |\ |z|\le a_-\});
 \]
 \item
 for any $a_-<r_-<r_+<a_+$ and $\al_1\in (0,\al)$,
  \bbe\label{eq:main_bound}
|\tu(z)|\le C_\tu(r_-,r_+;\al_1)(1+|z|)^{m_\tu}, \
z\in \cU(r_-, r_+;\al_1),
\ee
where $m_\tu$ depends only on $\tu$, and $C_\tu(r_-,r_+;\al_1)$ depends on $(r_-,r_+,\al_1)$.
\end{enumerate} 
\begin{thm}\label{thm:sinhI} Let Condition $Z$-SINH1$(a_-,a_+;\al)$ hold and $n>m_\tu$. Then
\begin{enumerate}[(a)]
\item
for any $\om_\ell\in (\pi/2-\al,\pi/2)$, there exist $\sg_\ell>0$ and $b_\ell>0$   such that 
$b_\ell>\sg_\ell\sin(\om_\ell)$, and
$\sg_\ell-b_\ell\sin(\om_\ell)\in (a_-, a_+)$;
\item
if   a triplet $(\sg_\ell,b_\ell,\om_\ell)$ satisfies the conditions in (a),
then the distance from $\cL_{L;\sg_\ell,b_\ell,\om_\ell}$ to the origin equals $\sg_\ell-b_\ell\sin(\om_\ell)$,
 $\cL_{L; \sg_\ell,b_\ell,\om_\ell}\subset\cU(a_-,a_+,\al)$ and \eq{eq:invZsinh} is valid; 

\item
if $\al\in (\pi/2,\pi)$, then, for any $\om_\ell
\in (\pi/2-\al, 0]$, $b_\ell>0$ and $\sg_\ell\in (a_-+b_\ell\sin(\om_\ell), a_++b_\ell\sin(\om_\ell))$, $\cL_{L; \sg_\ell,b_\ell,\om_\ell}\subset\cU(a_-,a_+,\al)$ and \eq{eq:invZsinh} is valid;
\item
in both cases (b) and (c), there exists $d_\ell>0$ such that the image of the strip $S_{(-d_\ell,d_\ell)}$ under $\chi_{L; \sg_\ell,b_\ell,\om_\ell}$ is a subset of $\cU(a_-,a_+,\al)$, and the distance from $\chi_{L;\sg_\ell,b_\ell,\om_\ell}(S_{(-d_\ell,d_\ell)})$ to the origin equals $\sg_\ell-b_\ell\sin(\om_\ell+d_\ell)$.

\end{enumerate}
\end{thm}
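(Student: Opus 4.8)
The plan is to handle the four assertions in turn, reducing everything to the explicit form of the contour; throughout write $\chi_L(y):=\chi_{L;\sg_\ell,b_\ell,\om_\ell}(y)$ and $\cL_L:=\chi_L(\bR)$. Expanding \eq{eq:sinhLapl} gives
\[
\chi_L(y)=\sg_\ell-b_\ell\sin(\om_\ell)\cosh(y)+i\,b_\ell\cos(\om_\ell)\sinh(y),
\]
so $\cL_L$ is a branch of a hyperbola centered at $\sg_\ell$ with semi-axes $b_\ell\sin\om_\ell$ and $b_\ell\cos\om_\ell$ and asymptotic directions $\om_\ell\pm\pi/2$. Part (a) is then pure algebra: fix $x_0\in(a_-,a_+)$ and put $\sg_\ell=x_0+b_\ell\sin\om_\ell$; the requirement $b_\ell>\sg_\ell\sin\om_\ell$ becomes $b_\ell\cos^2\om_\ell>x_0\sin\om_\ell$ (a lower bound on $b_\ell$), and I would in addition arrange $\sg_\ell\le a_+$, i.e. $b_\ell\sin\om_\ell\le a_+-x_0$, which is compatible with the lower bound and automatic when $a_-=0$. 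For the distance in (b) I would minimize $F(y)=|\chi_L(y)|^2$; a short computation gives $F'(y)=2\sinh(y)\big(b_\ell^2\cosh(y)-\sg_\ell b_\ell\sin\om_\ell\big)$, and $b_\ell>\sg_\ell\sin\om_\ell$ makes the second factor positive for all $y$, so $y=0$ is the unique minimizer and the distance is $\sg_\ell-b_\ell\sin\om_\ell$.

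The core of (b) and (c) is the inclusion $\cL_L\subset\cU(a_-,a_+,\al)$. Since $|\chi_L(y)|\ge|\chi_L(0)|=\sg_\ell-b_\ell\sin\om_\ell>a_-$, the curve avoids $\{|z|\le a_-\}$, so it is enough to show $a_+-\chi_L(y)\in\cC_\al$ for every $y$, i.e. to bound $h(y):=\arg(a_+-\chi_L(y))$. The key computation is
\[
h'(y)=\frac{-\,b_\ell\cos(\om_\ell)\big(b_\ell\sin(\om_\ell)+(a_+-\sg_\ell)\cosh(y)\big)}{|a_+-\chi_L(y)|^{2}},
\]
together with $h(0)=0$ (as $a_+-\chi_L(0)=a_+-x_0>0$) and the reflection $\chi_L(-y)=\overline{\chi_L(y)}$, which yields $h(-y)=-h(y)$. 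With $\sg_\ell\le a_+$ the bracket is positive (it equals $a_+-x_0>0$ at $y=0$ and is nondecreasing on $(0,\infty)$), so $h$ is monotone there, running from $0$ to $h(+\infty)=\om_\ell-\pi/2$; hence $|h(y)|<\pi/2-\om_\ell$, and $\pi/2-\om_\ell<\al$ is exactly $\om_\ell>\pi/2-\al$. In case (c) the prescribed range of $\sg_\ell$ is precisely $x_0\in(a_-,a_+)$ and forces $a_+-\sg_\ell>-b_\ell\sin\om_\ell\ge0$, so the same monotonicity applies, now with $|h(+\infty)|=\pi/2-\om_\ell\in[\pi/2,\al)$ since $\al>\pi/2$. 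This step is the main obstacle: the asymptotic directions by themselves do not bound $h$, and if $\sg_\ell>a_+$ were allowed the argument could overshoot the cone; it is the sign of $h'$—equivalently the condition $a_+-\sg_\ell\ge0$—that forces the whole branch into $\cC_\al$, which is why I secure $\sg_\ell\le a_+$ already in (a).

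Granting the inclusion, I would establish \eq{eq:invZsinh} by deforming the contour. Start from $u_n=\frac{1}{2\pi i}\oint_{|z|=r}\tu(z)z^{-n-1}dz$ for any $r\in(a_-,a_+)$ (the Laurent-coefficient formula, independent of $r$), truncate $\cL_L$ at $|z|=R$, and close it by the arc $A_R\subset\{|z|=R\}$ through the negative real axis. Because $\om_\ell>\pi/2-\al$, the whole connecting arc has argument in $(\pi-\al,\pi+\al)$, so $A_R\subset a_+-\cC_\al\subset\cU$; thus $\tu$ is analytic on and inside the closed contour apart from the singularities already enclosed by $\{|z|=r\}$ (those in $\{|z|\le a_-\}$ and the pole of $z^{-n-1}$ at $0$), while the apex bound $x_0<a_+$ keeps $z=a_+$ outside. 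Cauchy's theorem equates the closed-contour integral with $u_n$, and on $A_R$ the growth estimate \eq{eq:main_bound} gives $|\tu(z)z^{-n-1}|\le CR^{m_\tu-n-1}$, whence $\big|\int_{A_R}\big|\le C'R^{\,m_\tu-n}\to0$ as $R\to\infty$ because $n>m_\tu$; letting $R\to\infty$ gives \eq{eq:invZsinh}, with the orientation chosen so that increasing $y$ winds once counterclockwise about the origin. In case (c) the branch opens to the right, and one closes through the negative real axis the long way, still inside the wider cone $\al>\pi/2$.

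Finally, part (d) is a continuity argument. All inequalities used in (b)/(c) are open, so there is $d_\ell>0$ for which $(\sg_\ell,b_\ell,\om_\ell+v)$ still satisfies the hypotheses for every $v\in(-d_\ell,d_\ell)$; since $\chi_L(u+iv)=\chi_{L;\sg_\ell,b_\ell,\om_\ell+v}(u)$, the image of $S_{(-d_\ell,d_\ell)}$ equals $\bigcup_{|v|<d_\ell}\cL_{L;\sg_\ell,b_\ell,\om_\ell+v}\subset\cU$. The per-curve distance from (b)/(c) is $\sg_\ell-b_\ell\sin(\om_\ell+v)$, which decreases as $v\uparrow d_\ell$, so the distance from the strip image to the origin is $\sg_\ell-b_\ell\sin(\om_\ell+d_\ell)$; I would shrink $d_\ell$ if necessary to keep this quantity positive.
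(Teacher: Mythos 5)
Your proof follows the paper's skeleton for most steps --- the distance claim via minimizing $F(y)=|\chi_L(y)|^2$ (the paper asserts this as ``straightforward''; your factorization $F'(y)=2\sinh(y)\,b_\ell(b_\ell\cosh y-\sg_\ell\sin\om_\ell)$ is exactly the missing computation), the closing of the contour at infinity using $n>m_\tu$ and the bound \eq{eq:main_bound}, and the perturbation $\chi_L(u+iv)=\chi_{L;\sg_\ell,b_\ell,\om_\ell+v}(u)$ for part (d), which is also how the paper handles (d). The genuinely different step is the inclusion $\cL_{L;\sg_\ell,b_\ell,\om_\ell}\subset\cU(a_-,a_+,\al)$: the paper argues that ``$\sg_\ell-b_\ell\sin(\om_\ell)+e^{i(\pi/2+\om_\ell)}\bR_+$ is the asymptote'' and lies below the ray $a_++e^{i(\pi-\al)}\bR_+$, then invokes convexity; you instead control $h(y)=\arg(a_+-\chi_L(y))$ by the sign of $h'$. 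Your route buys something real here, because the paper's justification is flawed: the true asymptote passes through the center $\sg_\ell$, not the apex $\sg_\ell-b_\ell\sin\om_\ell$ (near the apex the branch lies to the \emph{right} of the apex-parallel line, by $b_\ell\sin\om_\ell(1-e^{-y})$), and your warning that the curve can ``overshoot the cone'' when $\sg_\ell>a_+$ is borne out by explicit admissible triplets. For instance, with $a_-=1/2$, $a_+=2$, $\al=\pi/4$, $\cos\om_\ell=0.15$, $\sg_\ell=100$, $b_\ell=99/\sin\om_\ell\approx 100.13$, the hypotheses of (a) hold ($b_\ell>\sg_\ell\sin\om_\ell\approx 98.87$, apex $=1\in(a_-,a_+)$), yet at $\cosh y=99/98$ the curve passes through $z\approx -0.0102+2.151\,i$, where $|z|>a_+$ and $|\arg(a_+-z)|\approx 46.9^\circ>\al$, so $\cL\not\subset\cU$. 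Thus part (b) as stated is not provable without a restriction of the type $\sg_\ell\le a_+$ that you impose, and your version is the correct one; part (c) needs no such fix, since there the hypothesis $\sg_\ell<a_++b_\ell\sin\om_\ell$ already forces your bracket $b_\ell\sin\om_\ell+(a_+-\sg_\ell)\cosh y$ to be positive, exactly as you say.

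Two smaller points on your side. First, in (a) you assert that $b_\ell\cos^2\om_\ell>x_0\sin\om_\ell$ (a lower bound on $b_\ell$) is compatible with $b_\ell\sin\om_\ell\le a_+-x_0$ (an upper bound); eliminating $b_\ell$ shows compatibility requires $x_0<a_+\cos^2\om_\ell$, so a suitable $x_0\in(a_-,a_+)$ exists only when $a_-<a_+\cos^2\om_\ell$. This is automatic when $a_-=0$ (the case in all of the paper's examples), but for $a_->0$ and $\om_\ell$ close to $\pi/2$ your strengthened (a) is vacuous --- consistent with the counterexample above, which lives precisely in that regime. Second, in the deformation step, Cauchy's theorem needs the whole region swept between $\{|z|=r\}$ and the truncated contour plus arc to lie in $\cU\setminus\{0\}$, not just the contour itself; your monotonicity of $h$ supplies exactly this (each ray from $a_+$ meets $\cL$ at most once, so the region beyond the curve, as seen from $a_+$, stays in the cone), but you should say so explicitly rather than only asserting analyticity ``on and inside the closed contour.''
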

\begin{proof} (a) If $\om_\ell\le 0$, we take any $b_\ell\in (0,a_-)$ and $\sg_\ell\in (a_-+b_\ell\sin(\om_\ell),
a_++b_\ell\sin(\om_\ell))$. If $\om_\ell>0$, we take any $\sg_\ell\in (a_-\cos^{-2}(\om_\ell), a_+\cos^{-2}(\om_\ell))$.
For $b^0_\ell=\sg_\ell\sin(\om_\ell)$, $\sg_\ell-b^0_\ell\sin(\om_\ell)=\sg_\ell\cos^2(\om_\ell)\in (a_-,a_+)$. Hence,
$\sg_\ell-b_\ell\sin(\om_\ell)\in (a_-,a_+)$ if $b_\ell-b^0_\ell>0$ is sufficiently small.

 (b,c)  If $\om_\ell>0$ and $\sg_\ell<b_\ell/\sin\om_\ell$, 
then it is straightforward to prove that
\bbe\label{eq:cLp}
\mathrm{dist}\, (0, \cL_{L; \sg_\ell,b_\ell,\om_\ell})=\sg_\ell-b_\ell\sin(\om_\ell),
\ee
therefore, $\cL_{L; \sg_\ell,b_\ell,\om_\ell}\cup \cD(0,a_-)=\emptyset$, and $\cD(0,a_-)$ is to the left of $\cL_{L; \sg_\ell,b_\ell,\om_\ell}$. If $\om_\ell\le 0$,  the region to the right
of $\cL_{L; \sg_\ell,b_\ell,\om_\ell}$ is convex. We note that $\cL_{L; \sg_\ell,b_\ell,\om_\ell}$ intersects the real axis at $\sg_\ell-b_\ell\sin(\om_\ell)\in (a_-,a_+)$, and we conclude that $\cD(0,a_-)$ is to the left of $\cL_{L; \sg_\ell,b_\ell,\om_\ell}$.

Next,
 $\sg_\ell-b_\ell\sin(\om_\ell)+e^{ i(\pi/2+\om_\ell)}\bR_+$ is the asymptote of $\cL_{L; \sg_\ell,b_\ell,\om_\ell}$ in the upper half-plane. The asymptote is below the ray $a_++e^{i(\pi-\al)}\bR_+$ since $\sg_\ell-b_\ell\sin(\om_\ell)<a_+$ and
 $\pi/2+\om_\ell>\pi-\al$, and the region to the left of 
 $\cL_{L; \sg_\ell,b_\ell,\om_\ell}$ is convex. Hence,  $\cL_{L; \sg_\ell,b_\ell,\om_\ell}$
 is to the left of the right boundary of $\cU(a_-,a_+,\al)$. Thus, $\cL_{L; \sg_\ell,b_\ell,\om_\ell}\subset \cU(a_-,a_+,\al)$,
 and since $m_\tu-n-1<-1$, we can deform
the contour $\{z=e^{i\varphi}\ |\ -\pi<\varphi<\pi\}$, first, into 
$((-\infty,-1]-i0]\cup \{z=e^{i\varphi}\ |\ -\pi<\varphi<\pi\}\cup (((-\infty,-1]+i0]$, and then  into $\cU(a_-,a_+,\al)$ remaining in $\cU(a_-,a_+,\al)$.

(d) If $d_\ell$ is sufficiently small in absolute value then  
$\cL_{L; \sg_\ell,b_\ell,\om_\ell\pm d_\ell}\subset \cU(a_-,a_+,\al)$. The proof is the same as for $d_\ell=0$.
We take into account that if $\sg_\ell<b_\ell/(2\sin(\om_\ell))$ and $d_\ell>0$ is sufficiently small, then $\sg_\ell<b_\ell/(2\sin(\om_\ell+d_\ell))$ as well.
\end{proof}

Let $\sg_\ell, b_\ell, \om_\ell, d_\ell>0$ and $n$ satisfy the conditions of Theorem \ref{thm:sinhI}. Then $f_n$ admits analytic continuation to the strip $S_{(-d_\ell,d_\ell)}$ and exponentially decays as $y\to \infty$ remaining in the strip.
It follows that $f_n$ satisfies the conditions of the following lemma.

\begin{lem}[\cite{stenger-book}, Thm.3.2.1] 
Let $f_n$ be analytic in the strip $S_{(-d,d)}$, \\ $\lim_{R\to \pm\infty}\int_{-d}^d |f_n(i s+R)|ds=0,$
and 
\bbe\label{Hnorm}
H(f_n,d):=\|f_n\|_{H^1(S_{(-d,d)})}:=\lim_{s\downarrow -d}\int_\bR|g(i s+ t)|dt+\lim_{s\uparrow d}\int_\bR|g(i s+t)|dt<\infty.
\ee Then
the error of the infinite trapezoid rule admits an upper bound 
\bbe\label{Err_inf_trap}
\left|u_n- \ze_\ell \sum_{j\in \bZ}f_n(j\ze_\ell)\right|\le H(f_n,d)\frac{\exp[-2\pi d/\ze]}{1-\exp[-2\pi d/\ze]}.
\ee
\end{lem}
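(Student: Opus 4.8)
The plan is to prove the bound by the classical cotangent--residue (Abel--Plana type) method, which delivers \eq{Err_inf_trap} directly; Poisson summation gives an equivalent route but needs exactly the same analytic justifications, so I would run the contour argument. Throughout, write $\ze$ for the step of the rule and recall from \eq{izeT0sinh} that $u_n=\int_\bR f_n(y)\,dy$.

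First I would fix $0<a<d$ and integrate $f_n(z)\,\pi\cot(\pi z/\ze)$ counterclockwise around the rectangle with horizontal sides $\{\Im z=\pm a\}$ and vertical sides $\{\Re z=\pm(N+\tfrac12)\ze\}$. In the strip the integrand is meromorphic with simple poles exactly at the nodes $z=j\ze$, and $\operatorname{Res}_{z=j\ze}\pi\cot(\pi z/\ze)=\ze$, so the residue theorem yields $\frac{1}{2\pi i}\oint f_n(z)\,\pi\cot(\pi z/\ze)\,dz=\ze\sum_{|j|\le N}f_n(j\ze)$. The vertical sides are placed at half-integer multiples of $\ze$ precisely so that $|\cot(\pi z/\ze)|\le 1$ there; combined with the hypothesis $\lim_{R\to\pm\infty}\int_{-d}^{d}|f_n(is+R)|\,ds=0$, the contributions of the vertical sides tend to $0$ as $N\to\infty$. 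Passing to the limit then expresses $\ze\sum_{j\in\bZ}f_n(j\ze)$ as the difference of the two horizontal line integrals over $\{\Im z=-a\}$ and $\{\Im z=a\}$, both oriented left to right.

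Next I would split the kernel on each line into a constant plus an exponentially small remainder, using $\pi\cot(\pi z/\ze)=\pi i+\frac{2\pi i}{e^{2\pi i z/\ze}-1}$ on the lower line and $\pi\cot(\pi z/\ze)=-\pi i+\frac{2\pi i}{1-e^{-2\pi i z/\ze}}$ on the upper line. The two constant pieces contribute $\tfrac12\bigl(\int_{\Im z=-a}f_n+\int_{\Im z=a}f_n\bigr)$, and since $f_n$ is analytic and integrable in the strip, Cauchy's theorem collapses each of these line integrals to $\int_\bR f_n(y)\,dy=u_n$, so the constant part equals exactly $u_n$. What survives is
\[
\ze\sum_{j\in\bZ}f_n(j\ze)-u_n=\int_{\Im z=-a}\frac{f_n(z)}{e^{2\pi i z/\ze}-1}\,dz-\int_{\Im z=a}\frac{f_n(z)}{1-e^{-2\pi i z/\ze}}\,dz.
\]
On $\{\Im z=\mp a\}$ both denominators have modulus at least $e^{2\pi a/\ze}-1$, so each remainder is bounded by $(e^{2\pi a/\ze}-1)^{-1}\int_\bR|f_n(t\mp ia)|\,dt$; adding the two and letting $a\uparrow d$ turns the bracket into the Hardy norm $H(f_n,d)$ and the prefactor into $(e^{2\pi d/\ze}-1)^{-1}=\exp[-2\pi d/\ze]/(1-\exp[-2\pi d/\ze])$, which is \eq{Err_inf_trap}.

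The main obstacle is analytic rather than algebraic: justifying the two limiting passages. Letting $N\to\infty$ requires the decay hypothesis to annihilate the vertical sides, which is why the half-integer placement (where $\cot$ stays bounded) is essential; letting $a\uparrow d$ requires that the boundary integrals $\int_\bR|f_n(t\pm ia)|\,dt$ converge to the limits defining $\|f_n\|_{H^1(S_{(-d,d)})}$, i.e. the finiteness asserted in \eq{Hnorm}. The collapse of the constant part to $u_n$ leans on the same integrability and decay to invoke Cauchy's theorem, so all three soft steps are controlled by the single standing assumption that $f_n\in H^1(S_{(-d,d)})$ with the stated vanishing at the ends of the strip.
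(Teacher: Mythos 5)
Your proof is correct: the paper offers no proof of this lemma (it is quoted verbatim from Stenger's book, Thm.~3.2.1), and your cotangent--residue contour argument is precisely the classical proof given there, with Stenger's $\sin$-kernel error representation replaced by the equivalent splitting $\pi\cot(\pi z/\ze)=\pm\pi i+$ exponentially small remainder, and the constants matching since $e^{-\pi d/\ze}/(2\sinh(\pi d/\ze))=\exp[-2\pi d/\ze]/(1-\exp[-2\pi d/\ze])$. You also correctly identify the only delicate points --- vanishing of the vertical sides at the half-integer abscissae, the Cauchy shift of the constant part to $u_n$, and the passage $a\uparrow d$ to the limits defining $H(f_n,d)$ --- all of which are covered by the stated hypotheses.
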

Note that the norm $H(f_n,d)$ is similar but not identical to the Hardy norm of $f_n$ defined on $S_{(-d,d)}$.
We will refer to $H(f_n,d)$ as a quasi-Hardy norm (q-Hardy norm).

 Once
an approximate bound $H_{\mathrm{appr.}}(f_n,d_\ell)$ for $H(f_n,d_\ell)$ is derived, it becomes possible to satisfy the desired error tolerance $\eps>0$
with a good accuracy letting
\bbe\label{rec_ze_ze}
\ze_\ell=\frac{2\pi d_\ell}{\ln(H_{\mathrm{appr.}}(f_n,d_\ell)/\eps)}.
\ee
Since $f_n(y)$ decays as $((b_\ell/2)e^{|y|})^{m_\tu-n}$ as $y\to\pm \infty$, it is straightforward to choose the truncation of the infinite sum on the RHS of \eq{Vn_inf_sinh}:
\bbe\label{Vn_inf_sinh_trunc}
u_n\approx \ze_\ell \sum_{|j|\le N_\ell}f_n(j\ze_j)
\ee
 to satisfy the given error tolerance. A good approximation to  $\La:=N_\ell\ze_\ell$ is 
\bbe\label{eqLa_z}
\La=\La(n-m_\tu, C_\tu, b_\ell, \eps):=\frac{1}{n-m_{\tu}}\ln\frac{C_{\tu}}{\eps}-\ln (b_\ell/2)+\La_0,
\ee
where $C_{\tu}$ and $m_{\tu}$ are from \eq{eq:main_bound}, and $\La_0$ is the length of 
$\cL_{L; \sg_\ell,b_\ell,\om_\ell}\cup \cD(0,1)$.  Thus, \bbe\label{N0}
N_\ell\approx (2\pi d_\ell)^{-1}(\ln H_{\mathrm{appr.}}(f_n,d_\ell)+\ln(1/\eps))\La(n-m_\tu, C_\tu, b_\ell, \eps).
\ee

\subsection{Parameter choice}\label{param_choice_Z_SINH} 
The construction of admissible deformation is simple in the case $\al\in (\pi/2, \pi)$.
Fix $a_-<r_-<r_+<a_+$, and  take $\om_\ell\in (\pi/2-\al, \pi/4)$ and
$d_\ell>0$ such that $0\le \om_\ell-d_\ell<\om_\ell+d_\ell<\pi/2-\al$. Since the step of the 
infinite trapezoid rule increases with $d_\ell$ (provided the q-Hardy norm does not increase too fast), it is approximately  optimal to choose $\om_\ell$ so that
the ray $ie^{i\om_\ell}\bR_+$ bisects the angle $e^{i(\pi-\al)}\bR_+\cup e^{i3\pi/4}\bR_+$.  Thus, we set  
$\om_\ell=-\pi/2+0.5(3\pi/4+\pi-\al)=3\pi/8-\al/2$, define
$d^0_\ell:=3\pi/4-(\pi/2+\om_\ell)=\al/2-\pi/8$, and set $d_\ell=k_dd_\ell^0$, where $k_d<1$ is close to 1, e.g., $k_d=0.9$.
Then we find $\sg_\ell\in \bR$ and $b_\ell>0$ such that $\sg_\ell-b_\ell\sin(\om_\ell-d_\ell)=r_+$,  $\sg_\ell-b_\ell\sin(\om_\ell+d_\ell)=r_-$.
 The straightforward calculations give 
\beqa\label{bell_sinh_L}
b_\ell&=&\frac{r_+-r_-}{\sin(\om_\ell+d_\ell)-\sin(\om_\ell-d_\ell)}=\frac{r_+-r_-}{2\cos(\om_\ell)\sin(d_\ell)},
\\\label{sgell_sinh_L}\sg_\ell&=&\frac{r_+\sin(\om_\ell+d_\ell)-r_-\sin(\om_\ell-d_\ell)}{2\cos(\om_\ell)\sin(d_\ell)},
\eqa 
and $r:=\sg_\ell-b_\ell\sin(\om_\ell)\in (r_-,r_+)$. Note that we need to choose $a_-<r_-<r_+<a_+$ so that the distance from the left boundary
$\cL_{L;\om_\ell,b_\ell,\om_\ell+d_\ell}$ to the origin equals $\sg_\ell-b_\ell\sin(\om_\ell+d_\ell)$. As we have proved
in Theorem \ref{thm:sinhI}, an equivalent condition is $b_\ell> \sg_\ell\sin(\om_\ell+d_\ell)$. Hence, the recommendation for the choice of $\om_\ell, d_\ell$ needs an adjustment. If $\al>\pi/2$, we choose $\om_\ell=(\al-\pi/2)/2$,
$d_\ell =-k_d\om_\ell$. In Examples \ref{example:SINH1simple} and \ref{example:SINH1} below, we make this simplifying choice  to compare the complexities of the trapezoid rule and sinh-acceleration algorithm. 
If $\om_\ell+d_\ell\ge 0$, 
 we derive an equivalent condition on $r_\pm$, $\om_\ell$ and $d_\ell$:
 \bbe\label{cond_rpm}
 r_-(1-\sin(\om_\ell+d_\ell)\sin(\om_\ell-d_\ell))< r_+(1-\sin^2(\om_\ell+d_\ell))
 \ee
 using \eq{bell_sinh_L}-\eq{sgell_sinh_L}.
If $a_-=0$ or sufficiently small, we can choose $r_\pm$ so that \eq{cond_rpm} holds. However,
 then $r_-$ can be small, 
but if $n$ is large,  it is necessary to choose $r_-<r_+$ very close to 1. Then  \eq{cond_rpm} can be satisfied
only if $\om_\ell+d_\ell$ is very small. We use an approximation
\bbe\label{apprpm}
(\om_\ell+d_\ell)^2-r_-(\om_\ell^2-d_\ell^2)\approx \de/2,
\ee
where $\de=r_+-r_-$.  With a simple choice $d_\ell=2\om_\ell/3$, \eq{apprpm} gives
$\om_\ell=c\sqrt{\de}$, where $c=\sqrt{9/48}$. In Example \ref{ex:one-sidedKBL-moment} below, it is necessary to choose $\om_\ell$ and $d_\ell>0$ so that $\om_\ell-d_\ell>0$, hence, we use this prescription.

\subsection{Examples}\label{ss:examplesZ-SINH1}
\begin{example}\label{example:SINH1simple}{\rm 
In Fig.~\ref{fig:Z_SINH1.pdf} (A), we plot nodes used for the evaluation of the 100-th moment of the distribution of a KoBoL subordinator, with the moment generating function $M(z)=e^{\Psi(z)}$; $\Psi(z)=c\Ga(-\nu)((\la-z)^\nu-\la^\nu)$, $c=0.1, \nu=0.5, \la=1.01$. Moment $\mu_{100}=5.32400799771669E-05$; the difference between values obtained with the two algorithms is smaller that $E-15$.
 Solid line: 1101 nodes used in the trapezoid rule. The number is chosen by hand as an approximately minimal number
 which satisfies the error tolerance $\eps=10^{-15}$.
  Dots: 33 nodes used in the sinh-acceleration, with the parameters
 $\om_\ell=-0.7854 $, $d_\ell=0.7069$. Parameters
 $\sg_\ell=0.978291504$ and $b_\ell=0.021775623$ are calculated using \eq{sgell_sinh_L} and \eq{bell_sinh_L} 
 with $r_+=1,r_-=0.98$. Step $\ze_\ell=0.1187$ is calculated using \eq{rec_ze_ze} with  
 an approximation $H_{\mathrm{appr.}}(f_n,d_\ell)=r_-^{-n}+10$. 
The prescription \eq{eqLa_z} gives unnecessary large $\La$, which we decrease by the factor 0.75.
 CPU times: 146 and 12 microsec. for the trapezoid rule and sinh-acceleration, respectively, the average over 100,000 runs.
 $\mu_{500}= 8.87234294030321E-08$
 can be calculated with the accuracy better than E-15 using the trapezoid rule with 1801 nodes and sinh-acceleration with
 30 nodes; the CPU times are 177 and 12 microsec., respectively, the average over 100,000 runs.
 \footnote{The calculations in the paper
were performed in MATLAB R2023b-academic use, on a MacPro Chip Apple M1 Max Pro chip
with 10-core CPU, 24-core GPU, 16-core Neural Engine 32GB unified memory,
1TB SSD storage.}

   \begin{figure}
\begin{tabular}{cc}

 \begin{subfigure}[h]{0.45\textwidth}

 \centering
    \includegraphics[width=0.9\textwidth,height=0.4\textheight]{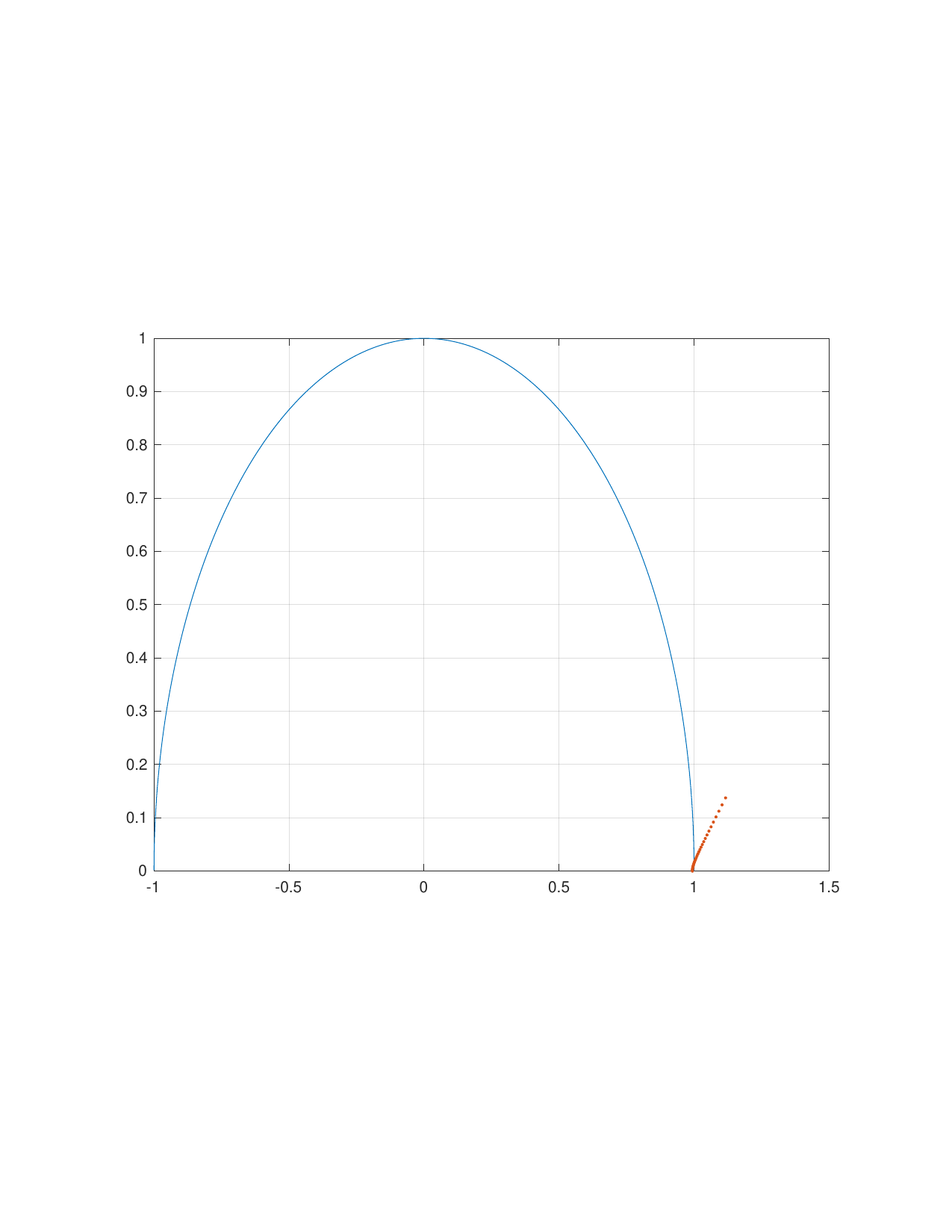}
    \caption{Illustration for Example \ref{example:SINH1simple}}\label{A}
\end{subfigure}
&
\begin{subfigure}[h]{0.45\textwidth}
\centering
    \includegraphics[width=0.9\textwidth,height=0.4\textheight]{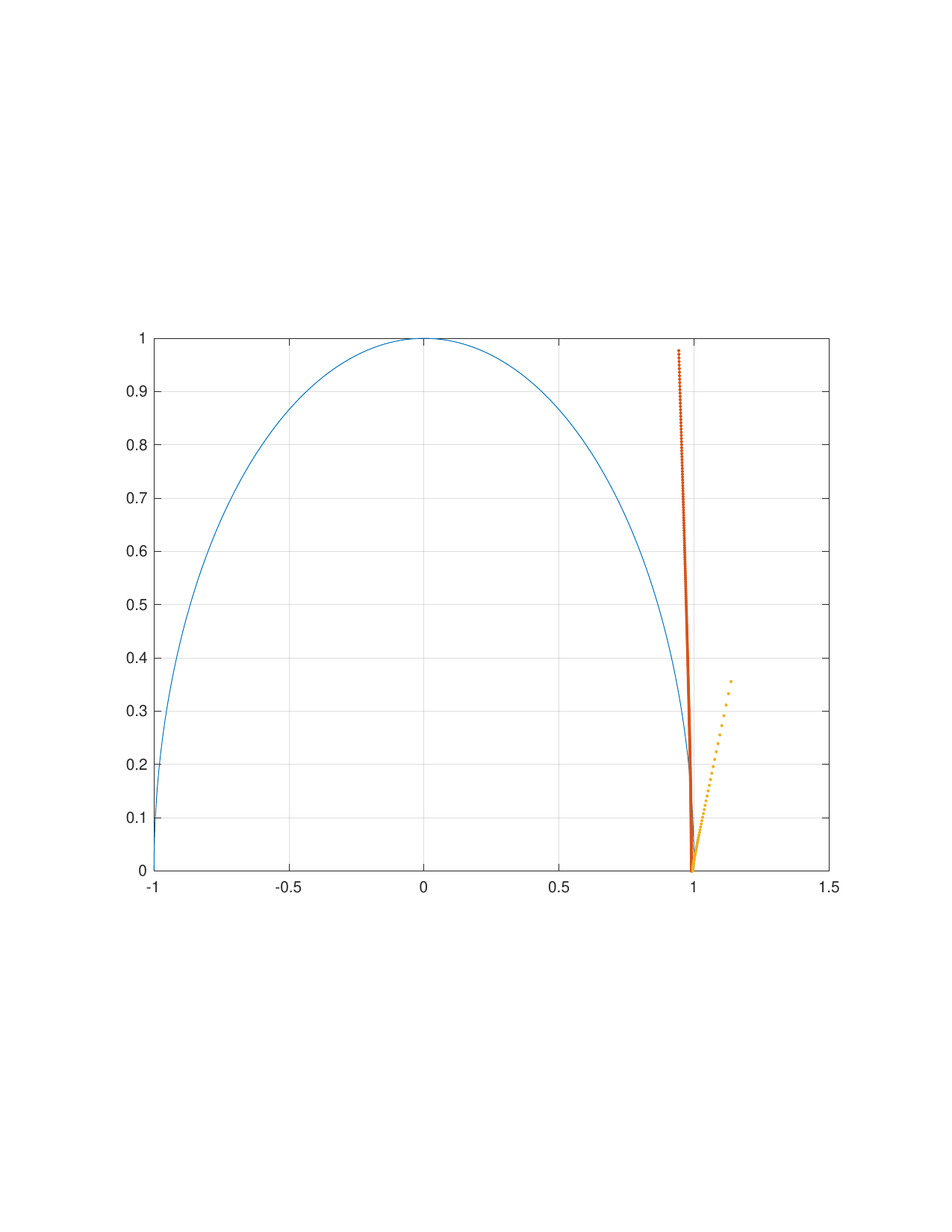}
    \caption{Illustration for Examples \ref{ex:one-sidedKBL-moment} and \ref{ex:one-sidedKBL-moment-II}}\label{B}
\end{subfigure}

\end{tabular}
\caption{\small (A) Solid line: 1101 nodes used in the trapezoid rule, dots: 33 nodes
used  in the Z-SINH-I algorithm.  (B) Solid line: 901 nodes used in the trapezoid rule.
Almost vertical line and dots: 306 and 55 nodes used in the Z-SINH-I and Z-SINH-II algorithm in  
Examples \ref{ex:one-sidedKBL-moment} and \ref{ex:one-sidedKBL-moment-II}, respectively. }
\label{fig:Z_SINH1.pdf}
\end{figure}

}
\end{example}

\begin{example}\label{example:SINH1}{\rm We compare the complexity of the trapezoid rule and sinh-acceleration
algorithm in the following more general situation.
Assume that $\al\in (\pi/2,\pi)$, and the following condition similar 
 to \eq{eq:bound_main_trap} holds:
there exist $C_{\tu}>0$ and $m_\tu$ such that
 \bbe\label{eq:main_bound_2}
|\tu(z)|\le C_{\tu}|1-z|^{-1}|z|^{m_\tu}, \
z\in 1-\cC_{\al}.
\ee
Note that if $n$ is small or moderate, then the sinh-deformation brings small advantages or none. Hence, we assume that $n$ is large. In this case, the q-Hardy norm is very large unless the interval
$[r_-,r_+]:=\chi_{L; \sg_\ell,b_\ell,\om_\ell}(S_{(-d_\ell,d_\ell)})\cap\bR\subset(0,1)$ 
is very close to 1,
and $\chi_{L; \sg_\ell,b_\ell,\om_\ell}(S_{(-d_\ell,d_\ell)})$ is at the distance $r_-$ from the origin.

\vskip0.1cm
\noindent
{\sc Algorithm.}  
Let Condition $Z$-SINH1$(a_-,a_+;\al)$ with $\al\in (\pi/2,\pi)$ and \eq{eq:main_bound_2} hold, and let $n$ be large. Given $M$ and the error tolerance $\eps$,
\begin{enumerate}[(1)]
\item
choose $M_1<M$ close to $M$, e.g., $M_1=0.9M$;
\item
set $r_-=e^{-(M+M_1)/n}$ and $r_+=e^{-(M-M_1)/n}$;
\item
define $\om_\ell=\pi/4-\al/2$ and
$d_\ell=k_d(\al/2-\pi/4)$, where $k_d<1$ is close to 1, e.g., $k_d=0.9$;
\item
define
 $(\sg_\ell,b_\ell)$ by \eq{sgell_sinh_L}, \eq{bell_sinh_L};
\item
define step $\ze_\ell$ and number of terms $N_0=\La/\ze_\ell$ by \eq{rec_ze_ze} and \eq{eqLa_z}.
\end{enumerate}
To apply \eq{rec_ze_ze}, we need an efficient approximation to the q-Hardy norm.
The q-Hardy norm being an integral one, we may derive an approximate bound working in the $z$-plane. On the strength of
\eq{eq:main_bound_2}, $H(f_n,d_\ell)$ admits an approximation 
\[
H(f_n,d_\ell)\approx \frac{1}{\pi} C_{\tu}(r_+^{-n-1}H(r_+)+r_-^{-n-1}H(r_-)), \]
where 
\[
H(r_\pm)=
\int_0^{+\infty}\left|1-r_\pm-te^{i(\om_\ell+\pi/2\mp d_\ell)}\right|^{-1}(1+t)^{-n-1+m_{\tu}}dt.
\]
Straightforward calculations show that $H(r_\pm)\sim -2\ln(1-r_\pm)$ as $n\to \infty$.
If
$r_+$ is chosen not too close to 1,  then 
$H(f_n,d_\ell)\approx (2/\pi) C_{\tu}e^{M+M_1}n/(M+M_1)$,
$\ln H(f_n,d_\ell)\approx 2M+\ln n, $
and 
\bbe\label{ze_ell}
\ze_\ell\approx \frac{2\pi d_\ell}{E+\ln n+ 2M}\approx \frac{k_d\pi(\al-\pi/2)}{E+\ln n+ 2M}.
\ee
Assuming that  $\al\in (\pi/2,\pi)$ is close to $\pi$, we have $\om_\ell+d_\ell\approx 0$, $\om_\ell-d_\ell\approx -\pi/2$, 
\[
b_\ell=\frac{r_+-r_-}{\sin(\om_\ell+d_\ell)-\sin(\om_\ell-d_\ell)}\approx \frac{2M}{n},\] and  \eq{eqLa_z} gives $\La$ satisfying
\bbe\label{eqLa_z_1}
\La\approx C_{\tu}\frac{E}{n-m_{\tu}}+\ln n-\ln M,
\ee
 where $C_{\tV,\ga}$ is independent of $\eps$ and $n$. The number of terms is, approximately,
 \bbe\label{N_ell_sinh}
 N_\ell\approx 2\frac{E+\ln n+ 2M}{k_d\pi^2}\left(C_{\tu}\frac{E}{n-m_{\tu}}+\ln (n/M)+\ga\right).
 \ee
 Assume that  $n$ is large but not extremely large so that $n>>m_\tu$,
 $n>> E>> \ln n$, and 
  $E>>M$. Then we may use the approximation
  \bbe\label{N_ell_sinh_2}
N_\ell\approx2\frac{(E+2M)\ln (n/M)}{k_d\pi^2}.
\ee
}
\end{example}
Comparing \eq{Neps_n_M} with \eq{N_ell_sinh_2}, we see that the complexity of the trapezoid rule
exceeds the complexity of the new numerical realization of the inverse $Z$-transform by the factor 
$K\approx (n/M)/\ln(n/M)$. Thus, 
 the sinh-acceleration is more efficient than  trapezoid rule if $n>>M$.

\begin{example}\label{ex:one-sidedKBL-moment}{\rm In Fig.~\ref{fig:Z_SINH1.pdf} (B), we plot nodes used for the evaluation of the 100-th moment of the distribution of a KoBoL subordinator with positive drift (in Example \ref{example:SINH1simple}, the drift is zero). 
The moment generating function is  $M(z)=e^{\Psi(z)}$; $\Psi(z)=\mu z+c\Ga(-\nu)((\la-z)^\nu-\la^\nu)$, $c=0.1, \nu=0.5, \la=1.01$, $\mu=0.05$. $\mu_{100}=5.60408317840114E-05$; the difference between values obtained with the two algorithms is smaller that $E-15$.
Circle: 1101 nodes used in the trapezoid rule. The number is chosen by hand as an approximately minimal number
 which satisfies the error tolerance $\eps=10^{-15}$.
  The almost vertical angle: 306 nodes used in the sinh-acceleration. 56 dots are nodes used in
  the modification of Z-SINH algorithm constructed in the following section - see Example \ref{ex:one-sidedKBL-moment-II}.
Since $\mu>0$, $M(z)$ decreases as $z\to\infty$ in the left half-plane but increases as $z\to\infty$ along any ray in the right half-plane. Hence, we may choose only $\om_\ell>0$ and $d_\ell>0$ so that
$\om_\ell-d_\ell\ge 0$. 
We use $r_-=0.98$ and $r_+=1$ close to 1, and the approximate recommendations  at the end of Section \ref{param_choice_Z_SINH}. Explicitly, $\sg_\ell=1.005$, $b_\ell=0.245$, $\om_\ell= 0.0612$,
$d_\ell=0.0408$, $\ze_\ell=0.0069$ (rounded). The prescription \eq{eqLa_z} gives unnecessary large $\La$, which we decrease by the factor 0.8.
 CPU times: 143 and 87 microsec. for the trapezoid rule and sinh-acceleration, respectively, the average over 100,000 runs.
}
\end{example}

\section{Sinh-acceleration II and III and Log-acceleration}\label{s:sinh_II} 
\subsection{Sinh-acceleration II}\label{s:sinh2}
If Condition $Z$-SINH1$(a_-,a_+;\al)$ holds with $\al<\pi/2$ and $\pi/2-\al$ is not small,  then the construction in Section \ref{s:sinh_I} gives a small $d_\ell$, hence, small $\ze_\ell$. At the same time, 
the truncation parameter $\La$ is not small. Hence, the number of terms $N_\ell$ is large. 
We alleviate these difficulties imposing an analog of Condition $Z$-SINH1$(a_-,a_+;\al)$ in terms of the function
$\tv(w):=\tu(w^2)$, and modifying the construction of the deformation as follows.

\vskip0.1cm
\noindent
{\sc Condition $Z$-SINH2$(a_-,a_+;\al)$.}  
 There exist $0\le a_-<1\le a_+$ and  $\al\in (\pi/2,\pi)$  such that
 \begin{enumerate}[(a)]
 \item
 $\tu$ admits analytic continuation to $\cD(a_-^2, a_+^2)$;
 \item $\tv(w)$ admits analytic continuation
 to $\cU^+(a_-,a_+,\al):=\{w\in \cU(a_-,a_+,\al)\ |\  \Re\,w\ge 0\}$;
 \item
 for any $a_-<r_-<r_+<a_+$ and $\al_1\in (\pi/2,\al)$,
  \bbe\label{eq:main_bound_3}
|\tv(w)|\le C_\tv(r_-,r_+;\al_1)(1+|w|)^{2m_\tu}, \
z\in \cU^+(r_-, r_+;\al_1),
\ee
where $m_\tv$ depends only on $\tv$, and $C_\tv(r_-,r_+;\al_1)$ depends on $(r_-,r_+,\al_1)$.
\end{enumerate} 
We change the variable $z=w^2$ in \eq{eq:invZ}
\bbe\label{eq:invZ_2}
u_n=\frac{1}{\pi i}\int_{|w|=1, \Re\,w\ge 0}\tv(w)w^{-2n-1}dw,
\ee
and note that $\tv(-w)(-w)^{-2n-1}=-\tv(w)w^{-2n-1}$  for $w\in i(-\infty, -1]\cup i[1,+\infty)$.
Hence,
\bbe\label{eq:invZ_2b}
u_n=\frac{1}{\pi i}\int_{\cL_0}\tv(w)w^{-2n-1}dw,
\ee
where $\cL_0=i(-\infty,-1]\cup\{w\ |\ |w|=1, \Re\,w>0\}\cup i[1,+\infty)$.

The  following theorem is a straightforward
modification of Theorem \ref{thm:sinhI} (c), (d).

\begin{thm}\label{thm:sinhII} Let Condition $Z$-SINH2$(a_-,a_+;\al)$ hold and $n>m_\tv$. Then
\begin{enumerate}[(a)]
\item
 for any $\om_\ell
\in (-\al/2, 0]$, there exist $b_\ell>0$ and $\sg_\ell\in (a_-+b_\ell\sin(\om_\ell), a_++b_\ell\sin(\om_\ell))$
s.t.
\bbe\label{eq:invZ_2c}
u_n=\frac{1}{\pi i}\int_{\cL_{L,\sg_\ell,b_\ell,\om_\ell}}\tv(w)w^{-2n-1}dw;
\ee
\item
 there exists $d_\ell>0$ such that $\chi_{L;\sg_\ell, b_\ell,\om_\ell}(S_{(-d_\ell,d_\ell)})\subset\cU^+(a_-,a_+,\al)$.
\end{enumerate}
\end{thm}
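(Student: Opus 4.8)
The plan is to transplant the proof of \theor{thm:sinhI}(c),(d) from the unit circle and the region $\cU(a_-,a_+,\al)$ to the contour $\cL_0$ and the right-half-plane region $\cU^+(a_-,a_+,\al)$, reading the range $\om_\ell\in(-\al/2,0]$ through the conformal map $z=w^2$. That map sends $\{\Re\,w>0\}$ onto $\bC\setminus(-\infty,0]$ and doubles arguments, so a sinh-contour in the $w$-plane whose tails point in the directions $\pm(\pi/2+\om_\ell)$ has $w^2$-image with tails in the directions $\pm(\pi+2\om_\ell)$; the bound $\om_\ell>-\al/2$ is exactly the requirement $\pi+2\om_\ell>\pi-\al$, i.e. that these image tails enter the cone $a_+^2-\cC_\al$ where $\tu$ is analytic. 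The restriction $\om_\ell\le0$ is what keeps the contour in the right half-plane, since $\Re\,\chi_{L;\sg_\ell,b_\ell,\om_\ell}(y)=\sg_\ell-b_\ell\cosh(y)\sin(\om_\ell)\ge\sg_\ell-b_\ell\sin(\om_\ell)$ for $\om_\ell\le0$.

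For part (a) I fix $\om_\ell\in(-\al/2,0]$, choose a small $b_\ell>0$ and $\sg_\ell\in(a_-+b_\ell\sin(\om_\ell),a_++b_\ell\sin(\om_\ell))$; the interval is nonempty and places the vertex $\sg_\ell-b_\ell\sin(\om_\ell)\in(a_-,a_+)$ on the real axis, so a neighbourhood of the vertex lies in the annulus $\cD(a_-,a_+)$. Exactly as in the proof of \theor{thm:sinhI}(c), for $\om_\ell\le0$ the region to the right of $\cL_{L;\sg_\ell,b_\ell,\om_\ell}$ is convex and $\{|w|\le a_-\}$ lies to its left, so the contour avoids the inner disc; the asymptotic computation above shows its tails run into the cone part of $\cU^+(a_-,a_+,\al)$, whence $\cL_{L;\sg_\ell,b_\ell,\om_\ell}\subset\cU^+(a_-,a_+,\al)$. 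Since $n>m_\tv$, the bound \eq{eq:main_bound_3} makes the integrand $\tv(w)w^{-2n-1}$ decay faster than $|w|^{-1}$ along every ray, so I may join $\cL_0$ and $\cL_{L;\sg_\ell,b_\ell,\om_\ell}$ by arcs of radius $R\to\infty$ whose contribution vanishes, and apply Cauchy's theorem in the region enclosed between the two contours, which lies in $\cU^+(a_-,a_+,\al)$ where $\tv$ is analytic. Starting from \eq{eq:invZ_2b} this gives \eq{eq:invZ_2c}.

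For part (b) I use that all the defining inequalities are open. Taking $\om_\ell<0$ (the endpoint $\om_\ell=0$ being handled by an admissible one-sided shrinking so that $\om_\ell+d_\ell\le0$ is preserved), I pick $d_\ell>0$ so small that $\om_\ell-d_\ell>-\al/2$ and $\om_\ell+d_\ell\le0$. Then for every $\om'\in[\om_\ell-d_\ell,\om_\ell+d_\ell]$ the contour $\cL_{L;\sg_\ell,b_\ell,\om'}$ still has its vertex in $(a_-,a_+)$, still lies in the right half-plane, and still has tails in the cone part of $\cU^+(a_-,a_+,\al)$, by the argument of part (a). As $\chi_{L;\sg_\ell,b_\ell,\om_\ell}(S_{(-d_\ell,d_\ell)})$ is the union of these contours, it is contained in $\cU^+(a_-,a_+,\al)$; this is the verbatim analogue of \theor{thm:sinhI}(d).

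The main obstacle is geometric bookkeeping rather than estimation: one must check that the whole region enclosed between $\cL_0$ and $\cL_{L;\sg_\ell,b_\ell,\om_\ell}$---not just the two curves---stays inside $\cU^+(a_-,a_+,\al)$, simultaneously avoiding the disc $\{|w|\le a_-\}$ and the excluded sliver around the positive real axis, and that the asymptotic-direction bound is matched to the half-angle $\al$ correctly through the doubling map $z=w^2$, which is the source of $\al/2$ rather than $\al$. The $\om_\ell=0$ boundary case of (b) likewise needs the one-sided adjustment above, just as the condition $\sg_\ell<b_\ell/(2\sin(\om_\ell+d_\ell))$ is accommodated in the proof of \theor{thm:sinhI}(d).
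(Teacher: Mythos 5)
Your overall route---reduce to \eq{eq:invZ_2b}, then deform inside the right half-plane exactly as in the proof of \theor{thm:sinhI}(c),(d), with the decay supplied by $n>m_\tv$ killing the connecting arcs at infinity---is precisely what the paper intends: its entire ``proof'' is the remark that the theorem is a straightforward modification of \theor{thm:sinhI}(c),(d), and your reconstruction supplies the correct supporting details (the identity $\Re\,\chi_{L;\sg_\ell,b_\ell,\om_\ell}(y)=\sg_\ell-b_\ell\cosh(y)\sin(\om_\ell)$ keeping the contour in $\{\Re\,w\ge 0\}$ for $\om_\ell\le 0$, the convexity argument for avoiding $\{|w|\le a_-\}$, and the vanishing of the large arcs).

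There is, however, a genuine inconsistency at the one step where the argument could fail: the justification of the range $\om_\ell\in(-\al/2,0]$. You derive $\om_\ell>-\al/2$ by pushing the tails through $z=w^2$ into the $z$-plane cone $a_+^2-\cC_\al$, i.e.\ you invoke analyticity of $\tu$ on a $z$-plane cone. But Condition $Z$-SINH2 literally grants only analyticity of $\tu$ on the annulus $\cD(a_-^2,a_+^2)$ and of $\tv$ on the $w$-plane region $\cU^+(a_-,a_+,\al)$, whose unbounded part is the cone with vertex $a_+$ and half-angle $\al$ \emph{in the $w$-plane}. For the tails of $\cL_{L;\sg_\ell,b_\ell,\om_\ell}$ (directions $\pm(\pi/2+\om_\ell)$) to lie in that region one needs $\pi/2+\om_\ell>\pi-\al$, i.e.\ $\om_\ell>\pi/2-\al$; since $\pi/2-\al>-\al/2$ for $\al<\pi$, your simultaneous claim that ``the tails run into the cone part of $\cU^+(a_-,a_+,\al)$'' is false for $\om_\ell\in(-\al/2,\,\pi/2-\al]$, and on that subrange neither the contour nor the region swept in the Cauchy deformation lies in $\cU^+(a_-,a_+,\al)$, so the deformation is not licensed by the stated hypothesis. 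The full range $(-\al/2,0]$ is correct only under the stronger (evidently intended) hypothesis that $\tv$ is analytic on the preimage $\{w\,|\,\Re\,w\ge 0,\ w^2\in\cU(a_-^2,a_+^2,\al)\}$, which strictly contains $\cU^+(a_-,a_+,\al)$; you must commit to one reading and carry it through rather than use both interchangeably. (The tension is inherited from the paper itself: its constraint $d_\ell<\min\{-\om_\ell,\om_\ell+\al/2\}$ matches the $-\al/2$ barrier, while its ``approximately optimal'' $\om_\ell=\pi/4-\al/2$ bisects $(\pi/2-\al,0)$.) Separately, your handling of the endpoint $\om_\ell=0$ in part (b) does not work as written: the strip $S_{(-d_\ell,d_\ell)}$ is symmetric, so its image always contains curves with rotation parameter $\om'>0$, for which $\Re\,\chi\to-\infty$ as $|y|\to\infty$, exiting $\{\Re\,w\ge 0\}$; a ``one-sided shrinking'' cannot produce the symmetric strip the statement asserts, so (b) genuinely requires $\om_\ell<0$, as the paper's own restriction $d_\ell<-\om_\ell$ silently concedes.
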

The sinh-change of variables, choice of parameters and error bounds are essentially as in Section \ref{s:sinh_I}. The
differences are: 1) only $\om_\ell\in (-\al/2, 0)$ are inadmissible, and $d_\ell\in (0, \min\{-\om_\ell, \om_\ell+\al/2\})$; an approximately optimal choice is
$\om_\ell=\pi/4-\al/2$ and $d_\ell=-k_d\om_\ell$, where $k_d<1$. Hence, $\ze_\ell$ cannot be made as
large as in Section \ref{s:sinh_I}; 
2) in an approximation for the q-Hardy norm,  $2n$ should be used instead of $n$, hence, the truncation parameter is smaller, 3) a choice
of $r_\pm$ sufficiently close to 1 is determined by $2n$ instead of $n$  (the maximum of
$|z|^{-n-1}$ over $\chi_{L;\sg_\ell, b_\ell,\om_\ell}(S_{(-d_\ell,d_\ell)})$ should be not too large to avoid large rounding errors).

\begin{example}\label{ex:one-sidedKBL-moment-II}{\rm   In Example \ref{ex:one-sidedKBL-moment}, $\mu_{100}$ is calculated with the accuracy better than E-15 using 
Z-SINH-II algorithm with $N_\ell=55$, the CPU time is 19 microsec., the average over 100,000 runs.
See Fig.~\ref{fig:Z_SINH1.pdf} (B) for illustration.}
\end{example}
\begin{example}\label{ex:mixture_one-sidedKBL-delta-II}{\rm  The moment generating
function is $0.3e^{\mu z}+0.7M(z)$, where $\mu=2$ and $M(z)$ is from Example \ref{example:SINH1simple}. 
$\mu_{100}=3.72680559839856E-05$ is calculated with the accuracy better than E-15 using the trapezoid rule,
 Z-SINH-I and
Z-SINH-II algorithms with $N=1101$,  $N_\ell=306$, $N_\ell=  56$, respectively.
 The CPU times are 183, 109 and 23 microsec., respectively. 
}
\end{example}
In some cases, it is useful or even necessary to make a further change of variables in \eq{eq:invZ_2b}, 
\bbe\label{eq:invZ_2d0}
u_n=\frac{p}{\pi i}\int_{\cL^{1/p}_0}\tv(w_1^p)w_1^{-2np-1}dw_1,
\ee
where $p>1$ and $\cL^{1/p}_0:=\{w_1\ |\ \Re\, w_1>0, w_1^p\in \cL_0\}$, and then apply the sinh-acceleration.
The  following theorem is a straightforward
modification of Theorem \ref{thm:sinhII}.

\begin{thm}\label{thm:sinhIIb} Let Condition $Z$-SINH2$(a_-,a_+;\al)$ hold and $n>m_\tv$. Then
\begin{enumerate}[(a)]
\item
 for any $p>1$, $\om_\ell
\in ((\pi-\al)/(2p)-\pi/2, \pi/(2p)-\pi/2)$, there exist $b_\ell>0$ and 
 $\sg_\ell\in ((a_-)^{1/p}+b_\ell\sin(\om_\ell), (a_+)^{1/p}+b_\ell\sin(\om_\ell))$ such that
\bbe\label{eq:invZ_2d}
u_n=\frac{p}{\pi i}\int_{\cL_{L,\sg_\ell,b_\ell,\om_\ell}}\tv(w_1^p)w_1^{-2np-1}dw_1;
\ee
\item
 there exists $d_\ell>0$ such that $\{z^p\ |\ \Re\,z>0, z\in \chi_{L;\sg_\ell, b_\ell,\om_\ell}(S_{(-d_\ell,d_\ell)})\}\subset\cU^+(a_-,a_+,\al)$.
\end{enumerate}
\end{thm}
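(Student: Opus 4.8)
The plan is to read \eq{eq:invZ_2d} as the composition of two operations applied to the already-established representation \eq{eq:invZ_2b}: first the conformal change of variables $w=w_1^p$ producing \eq{eq:invZ_2d0}, and then a sinh-deformation of the resulting contour $\cL_0^{1/p}$ in the $w_1$-plane, reducing everything to the geometry of \theor{thm:sinhII}. I would introduce $\cV:=\{w_1\ |\ \Re\,w_1>0,\ w_1^p\in\cU^+(a_-,a_+,\al)\}$, the domain on which $w_1\mapsto\tv(w_1^p)$ is analytic (as the composition of the analytic $\tv$ on $\cU^+(a_-,a_+,\al)$ with the analytic power map), and then show that $\cL_{L;\sg_\ell,b_\ell,\om_\ell}$, and a thin strip around it, lies in $\cV$ for the stated parameter ranges; the representation \eq{eq:invZ_2d} then follows from Cauchy's theorem.

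First I would record the effect of the power map. On $\{\Re\,w_1>0\}$ the map $w_1\mapsto w_1^p$ is conformal and multiplies arguments by $p$, so the arms $i[1,+\infty)$ and $i(-\infty,-1]$ of $\cL_0$ (arguments $\pm\pi/2$) pull back to arms of $\cL_0^{1/p}$ at arguments $\pm\pi/(2p)$, while the unit arc is preserved. The change of variables \eq{eq:invZ_2d0} is justified by Cauchy's theorem once I check that the arcs at infinity used to close the contours contribute nothing: by \eq{eq:main_bound_3} the integrand is $O(|w_1|^{2pm_\tv-2np-1})$, and the exponent is $<-1$ precisely because $n>m_\tv$, so the tails are integrable and the closing arcs vanish.

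Next comes the geometric core, the inclusion $\cL_{L;\sg_\ell,b_\ell,\om_\ell}\subset\cV$. The asymptotic directions of the sinh-contour are $\pm(\pi/2+\om_\ell)$, and for $\om_\ell\in((\pi-\al)/(2p)-\pi/2,\ \pi/(2p)-\pi/2)$ these lie in $\pm((\pi-\al)/(2p),\ \pi/(2p))$; multiplying by $p$, the images $\{w_1^p\}$ have asymptotic directions in $\pm((\pi-\al)/2,\ \pi/2)$, which are exactly the admissible arm directions in \theor{thm:sinhII}. I would then reproduce the argument of \theor{thm:sinhI}(c): choosing $\sg_\ell\in((a_-)^{1/p}+b_\ell\sin(\om_\ell),(a_+)^{1/p}+b_\ell\sin(\om_\ell))$ forces the real crossing point $\sg_\ell-b_\ell\sin(\om_\ell)\in((a_-)^{1/p},(a_+)^{1/p})$, so its $p$-th power lands in $(a_-,a_+)$, i.e. between the radii of the annulus; convexity of the region bounded by the sinh-contour, together with the asymptote comparison just made, shows that the entire image $\{w_1^p\ |\ w_1\in\cL_{L;\sg_\ell,b_\ell,\om_\ell}\}$ stays to the correct side of the right boundary of $\cU^+(a_-,a_+,\al)$ and outside the disc of radius $a_-$. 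Hence the image lies in $\cU^+(a_-,a_+,\al)$ and \eq{eq:invZ_2d} holds.

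Finally, part (b) is obtained exactly as \theor{thm:sinhI}(d): the inclusion $\cL_{L;\sg_\ell,b_\ell,\om_\ell}\subset\cV$ is an open condition, so replacing $\om_\ell$ by $\om_\ell\pm d_\ell$ for small $d_\ell>0$ keeps the two boundary contours in $\cV$, and the image of the whole strip $S_{(-d_\ell,d_\ell)}$ lands in $\cU^+(a_-,a_+,\al)$ by the same convexity argument. The step I expect to be genuinely delicate is the one in the third paragraph: because $w_1\mapsto w_1^p$ is nonlinear, the image of the (already curved) sinh-contour is not itself a sinh-contour, so controlling it uniformly — verifying that the bending introduced by the power map never pushes the finite part of the image across the boundary of $\cU^+(a_-,a_+,\al)$ — requires the asymptotic-direction bookkeeping above to be supplemented by a monotonicity estimate for the argument along the whole contour, not merely in the limit.
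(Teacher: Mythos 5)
Your proposal is correct and takes essentially the same route as the paper, which offers no separate proof beyond declaring \theor{thm:sinhIIb} a straightforward modification of \theor{thm:sinhII}: your decomposition into the power-map change of variables \eq{eq:invZ_2d0} (with the tail exponent $2p(m_\tv-n)-1<-1$ supplied by $n>m_\tv$), the multiplication of asymptotic arm directions by $p$, and the recycled convexity/asymptote geometry of \theor{thm:sinhI}(c),(d) is exactly the intended modification. The delicate step you flag in your last paragraph is genuine but closes quickly: writing $\chi_{L;\sg_\ell,b_\ell,\om_\ell}(y)=\sg_\ell+b_\ell\cosh\bigl(y+i(\pi/2+\om_\ell)\bigr)$ exhibits $\chi(y)-\sg_\ell$ as a positive combination of $e^{\pm i(\pi/2+\om_\ell)}$, whence the whole contour lies in the sector $|\arg w_1|\le \pi/2+\om_\ell<\pi/(2p)$ with $|\chi(y)|$ and $|\arg\chi(y)|$ increasing monotonically in $|y|$, so your asymptotic-direction bookkeeping does control the finite part of the image as well.
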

An approximately optimal choice is $\om_\ell=(\gap+\gam)/2$, $d_\ell=k_d(\gap-\gam)/2$, where $k_d=0.9$;
$r_-<r_+$ are chosen so that $(r_\pm)^p\in (a_-,a_+)$ are close to 1, and  $(\sg_\ell,b_\ell)$ are defined by \eq{sgell_sinh_L}, \eq{bell_sinh_L}.  In an approximation for the q-Hardy norm,  $2np$ should be used instead of $n$.

\subsection{Sinh-acceleration III}\label{s:sinh3}
Conditions $Z$-SINH1 and $Z$-SINH2 are too restrictive if the moments of distributions not on $\bR_+$ but on $\bR$ are evaluated. To cover this case,
we rewrite \eq{eq:invZ} as
\bbe\label{eq:invZ2}
u_n=\frac{1}{2\pi i}\int_{|z|=1,\Re\,z>0}(\tu(z)+(-1)^n\tu(-z))z^{-n-1}dz.
\ee
Assume that $\tu$ admits analytic continuation to an open domain $\cU$ in the right half-plane,
 containing $i(\bR\setminus (-1,1))\cup\{z=e^{i\phi}\ |\ \phi\in (-\pi/2-\eps,-\pi/2+\eps)\cup (\pi/2-\eps,\pi/2+\eps)\}$,
where $\eps>0$. For $z\in \cU$, we have
$ (\tu(-z)+(-1)^n\tu(-(-z)))(-z)^{-n-1}=-(\tu(-z)(-1)^n+\tu(z))z^{-n-1},$
therefore, 
we may rewrite \eq{eq:invZ4} as follows
\bbe\label{eq:invZ4}
u_n=\frac{1}{2\pi i}\int_{\cL_0}(\tu(z)+(-1)^n\tu(-z))z^{-n-1}dz,
\ee
where $\cL_0=i(-\infty,-1]\cup\{z\ |\ |z|=1, \Re\,z>0\}\cup i[1,+\infty)$.
If the domain of analyticity of $\tu$ is sufficiently large, we can deform $\cL_0$ into an appropriate
sinh-contour $\cL_{L,\sg_\ell,b_\ell,\om_\ell}$.
A sufficient simple condition is 
the following analog of Condition $Z$-SINH1$(a_-,a_+;\al)$. 
\vskip0.1cm
\noindent
{\sc Condition $Z$-SINH3$(a_-,a_+;\ga)$.}  
 There exist   $0<\ga\le \pi/2$, and $0\le a_-<1\le a_+$ such that
 \begin{enumerate}[(a)]
 \item
  $\tu$ admits analytic continuation to
\[
 \cU^{\mathrm{sym}}(a_-,a_+,\ga):=
((-a_+,a_+)+i(\cC_\ga\cup\{0\}\cup(-\cC_\ga)))\setminus \{z\ |\ |z|\le a_-\};
\]
 \item for any $a_-<r_-<r_+<a_+$ and $\ga'\in (0,\ga)$,
  \bbe\label{eq:main_bound_III}
|\tu(z)|\le C_\tu(r_-,r_+;\ga')(1+|z|)^{m_\tu}, \
z\in \cU^{\mathrm{sym}}(r_-, r_+;\ga'),
\ee
where $m_\tu$ depends only on $\tu$, and $C_\tu(r_-,r_+;\ga')$ depends on $(r_-,r_+,\ga')$.
\end{enumerate}
\begin{thm}\label{thm:sinhIII} Let Condition $Z$-SINH3$(a_-,a_+;\ga)$ hold and $n>m_\tv$. Then
\begin{enumerate}[(a)]
\item
 for any $\om_\ell
\in (-\ga/2, \ga/2)$, there exist $b_\ell>0$ and $\sg_\ell\in (a_-+b_\ell\sin(\om_\ell), a_++b_\ell\sin(\om_\ell))$
such that
\bbe\label{eq:invZ5}
u_n=\frac{1}{2\pi i}\int_{\cL_{L,\sg_\ell,b_\ell,\om_\ell}}(\tu(z)+(-1)^n\tu(-z))z^{-n-1}dz;
\ee
\item
there exists $d_\ell>0$ such that $\chi_{L;\sg_\ell, b_\ell,\om_\ell}(S_{(-d_\ell,d_\ell)})\subset\cU^{\mathrm{sym}}(a_-,a_+,\ga)$.
\end{enumerate}
\end{thm}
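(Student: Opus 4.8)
The plan is to follow the pattern of the proofs of Theorem \ref{thm:sinhI}(c),(d) and Theorem \ref{thm:sinhII}, the new ingredient being the symmetry of the domain. First I would record that $\cU^{\mathrm{sym}}(a_-,a_+,\ga)$ is invariant under $z\mapsto -z$: the interval $(-a_+,a_+)$, the set $\cC_\ga\cup\{0\}\cup(-\cC_\ga)$, and the excluded disc $\{|z|\le a_-\}$ are each symmetric about the origin. Hence $z\mapsto\tu(-z)$ is analytic on $\cU^{\mathrm{sym}}(a_-,a_+,\ga)$ whenever $\tu$ is, so the integrand $g(z):=\tu(z)+(-1)^n\tu(-z)$ is analytic there, and \eq{eq:main_bound_III} applied to both summands gives $|g(z)|\le C(1+|z|)^{m_\tu}$ on any compactly contained subcone.

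For part (a) I would fix the triple by prescribing where the contour crosses the real axis. Writing $\chi_{L;\sg_\ell,b_\ell,\om_\ell}(y)=(\sg_\ell-b_\ell\cosh y\,\sin\om_\ell)+i\,b_\ell\sinh y\,\cos\om_\ell$, one sees the contour $\cL_L:=\cL_{L;\sg_\ell,b_\ell,\om_\ell}$ meets $\bR$ only at $y=0$, at the point $\sg_\ell-b_\ell\sin\om_\ell$, and the constraint $\sg_\ell\in(a_-+b_\ell\sin\om_\ell,\,a_++b_\ell\sin\om_\ell)$ is precisely $\sg_\ell-b_\ell\sin\om_\ell\in(a_-,a_+)$; choosing $b_\ell$ large enough also secures the auxiliary inequality $\sg_\ell<b_\ell/\sin\om_\ell$ needed when $\om_\ell>0$. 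To show $\cL_L\subset\cU^{\mathrm{sym}}(a_-,a_+,\ga)$ I would argue by asymptotes and convexity as in Theorem \ref{thm:sinhI}(b): as $y\to+\infty$ the contour is asymptotic to the ray of direction $e^{i(\pi/2+\om_\ell)}$ and as $y\to-\infty$ to that of direction $e^{-i(\pi/2+\om_\ell)}$, and since $|\om_\ell|<\ga/2<\ga$ these lie strictly inside the upper cone $i\cC_\ga$ and the lower cone $i(-\cC_\ga)$, respectively; the convexity of the region to the right of $\cL_L$ (for $\om_\ell\le0$), together with $\mathrm{dist}(0,\cL_L)=\sg_\ell-b_\ell\sin\om_\ell>a_-$ (for $\om_\ell>0$), upgrades this from the asymptotes to the entire contour and shows that $\cL_L$ clears the hole.

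With containment in hand, I would obtain \eq{eq:invZ5} by Cauchy's theorem applied to the region between $\cL_0$ and $\cL_L$, both of which run from $-i\infty$ to $+i\infty$ inside $\cU^{\mathrm{sym}}(a_-,a_+,\ga)$ on the same side of the hole; since $g$ is analytic there and $|g(z)z^{-n-1}|\sim|z|^{m_\tu-n-1}$ with $n>m_\tu$, the caps joining the two contours at $|z|=R$, of length $O(R)$, contribute $O(R^{m_\tu-n})\to0$, so the two integrals agree. Part (b) is then immediate: the argument of (a) used $\om_\ell$ only through the strict inclusions $|\om_\ell|<\ga$, so for $0<d_\ell<\ga/2-|\om_\ell|$ both $\cL_{L;\sg_\ell,b_\ell,\om_\ell\pm d_\ell}$ still lie in $\cU^{\mathrm{sym}}(a_-,a_+,\ga)$, and because $\chi_{L;\sg_\ell,b_\ell,\om_\ell}$ carries the strip $S_{(-d_\ell,d_\ell)}$ into the region swept out by these perturbed contours, its image is contained in $\cU^{\mathrm{sym}}(a_-,a_+,\ga)$.

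The step I expect to be the main obstacle is the rigorous justification of the deformation in (a): one must verify that $\cL_0$ and $\cL_L$ are homotopic within $\cU^{\mathrm{sym}}(a_-,a_+,\ga)$ through contours that never meet the excluded disc nor leave the two cones, and that the connecting caps genuinely vanish. The geometric half is settled by the asymptote/convexity analysis above; the analytic half rests entirely on the quantitative decay $m_\tu-n-1<-1$ furnished by \eq{eq:main_bound_III} together with the hypothesis $n>m_\tu$.
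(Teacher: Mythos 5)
Your proposal matches the paper's intended argument: the paper gives no separate proof of Theorem \ref{thm:sinhIII}, deriving the representation \eq{eq:invZ4} over $\cL_0$ in the text and leaving the deformation as the same modification of Theorem \ref{thm:sinhI}(b)--(d) that you reconstruct (crossing point $\sg_\ell-b_\ell\sin\om_\ell\in(a_-,a_+)$, asymptote directions $e^{\pm i(\pi/2+\om_\ell)}$ strictly inside the two cones, convexity/distance to clear the disc $\{|z|\le a_-\}$, and vanishing caps from $n>m_\tu$, which is evidently the intended reading of the hypothesis ``$n>m_\tv$''). One small correction: in (b) the range $0<d_\ell<\ga/2-|\om_\ell|$ secures only the angle condition for the tilted contours $\cL_{L;\sg_\ell,b_\ell,\om_\ell\pm d_\ell}$, whereas your claim that part (a) ``used $\om_\ell$ only through $|\om_\ell|<\ga$'' overlooks that the perturbed crossing points $\sg_\ell-b_\ell\sin(\om_\ell\pm d_\ell)$ must also stay in $(a_-,a_+)$ and, for positive tilt, $\sg_\ell<b_\ell/\sin(\om_\ell+d_\ell)$ must persist -- all open conditions, so shrinking $d_\ell$ restores the existential claim, exactly as in the ``sufficiently small $d_\ell$'' proviso of the paper's proof of Theorem \ref{thm:sinhI}(d).
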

The choice of parameters and error bounds are essentially as in Section \ref{s:sinh_I}. The main difference is that 
 only $\om_\ell\in (-\ga,\ga)$ are admissible, and an approximately optimal choice is
$\om_\ell=0$ and $d_\ell=k_d\ga$, where $k_d<1$ is close to 1.

\begin{example}\label{example:SINH3_1}{\rm Changing the order $\nu=1.5$ of KoBoL in Example \ref{example:SINH1simple}: $M(z)=e^{\Psi(z)}$; $\Psi(z)=c\Ga(-\nu)((\la-z)^\nu-\la^\nu)$, $c=0.1, \nu=1.5, \la=1.01$, 
we obtain a function which does not satisfy Condition $Z$-SINH1 but  satisfies Condition $Z$-SINH3$(a_-,a_+;\ga)$ with 
$\ga=(\pi/2)\min\{1-1/\nu, 3/\nu-1\}=\pi/6$
(see \cite{EfficientAmenable} for the calculation of the maximal cone of analyticity of $M(z)$, where $M(z)$ is bounded), $m_\tu=0$, $a_+=1.01$, and $a_-=0$.
 $\mu_{100}=3.00859241487316E-07$; the difference between values obtained with the two algorithms is smaller that $E-15$. The number of nodes: 1001 and 72, 
 CPU times: 159 and 25 microsec. for the trapezoid rule and sinh-acceleration, respectively, the average over 100,000 runs.
 }
\end{example}

\begin{example}\label{example:SINH3_2}{\rm Let $M(z)=e^{\Psi(z)}$, where $\Psi(z)=\de(\la^\nu-(\la^2-z^2)^{\nu/2})$, $\nu\in (0,2)$, $\la>1$, $\de>0$,
be the Laplace exponent of a symmetric Normal Tempered Stable (NTS) process \cite{B-N-L}. Then
Condition $Z$-SINH3$(a_-,a_+;\ga)$ is satisfied with 
$\ga=(\pi/2)\min\{1/\nu, 1\}$, $0=a_-<a_+\le \la$. Non-symmetric case is treated similarly but if $\nu\in (0,1)$ and the drift $\mu\neq 0$ is introduced then $Z$-SINH3$(a_-,a_+;\ga)$ fails.
}
\end{example}

\begin{rem}\label{rem:SINH3}{\rm 
\begin{enumerate}[(1)]
\item
In some cases, it is useful to change the variable $z=w^p$, where $p>1$:
\bbe\label{eq:invZ5p}
u_n=\frac{p}{2\pi i}\int_{\cL_0}(\tu(w^p)+(-1)^n\tu(-w^p))w^{-np-1}dw,
\ee
and then use a sinh-deformation with $\om_\ell\in (\gam,\gap)$, where $\gap=\pi/2(1/p-1)$, $\gam=\gap-\ga/p$.
An approximately optimal choice is $\om_\ell=(\gap+\gam)/2, d_\ell=k_d(\gap-\gam)/2$. 
\item
It may be useful to start with the rotation of  the complex plane $z=z'e^{i\varphi}$ in \eq{eq:invZ} so that
$\tv(z')=\tu(z'e^{i\varphi})$ satisfies Condition $Z$-SINH3$(a_-,a_+;\ga)$ with a larger $\ga$, and then apply \eq{eq:invZ4} or \eq{eq:invZ5p}.
\end{enumerate}
}
\end{rem}

\begin{example}\label{example: SINH3 fractional}{\rm 
Let $\tu$ be a rational function without poles on $\bT$, and let $\cZ$ be the set of poles outside $\bT$.
We choose $\varphi\in [-\pi,\pi)$ so that the angular distance from $\{e^{i\varphi},e^{-i\varphi}\}$ to $\cZ$ is maximal.
} 
\end{example}
\subsection{Log-acceleration}\label{ss:Log}
If $\tu(z)$ is a linear combination of functions satisfying Condition $Z$-SINH3 with $\al>\pi/2$, exponentials functions $e^{xz}$ with $x\in \bR\setminus\{0\}$ (this is the case if
$\tu$ is $Z$-transform of a measure having atoms or moment generating function of probability distributions of wide classes of L\'evy processes of finite variation with non-zero drift) then Condition $Z$-SINH3$(a,b;\gam,\gap)$ fails
but a weaker form of this condition is satisfied.

\vskip0.1cm
\noindent
{\sc Condition $Z$-LOG$(a,b;\al)$.}  
 There exist   $\al>0$  and $0\le a_-<1\le a_+$ such that
 \begin{enumerate}[(a)]
 \item $\tu(z)$ admits analytic continuation
 to \[
 \cU^{\mathrm{log}}(a_-,a_+,\al):=
 \{z\ |\ (a_-<|z|<a_+) \vee 
 (|z|\ge a_+, |\Re\,z|\le a_++\al\ln (1+|\Im\,z|))\};
 \]
 \item
 for any $a_-<r_-<r_+<a_+$ and $\al'\in (0,\al)$,
  \bbe\label{eq:main_bound_LOG}
|\tu(z)|\le C_\tu(r_-,r_+;\al')(1+|z|)^{m_\tu+m'_\tu \al'}, \
z\in \cU^{\mathrm{log}}(r_-, r_+;\al'),
\ee
where $m_\tu, \mu'_\tu$ depend only on $\tu$, and $C_\tu(r_-,r_+;\al')$ depends on $(r_-,r_+,\al')$.
\end{enumerate} 
We start with the reduction to \eq{eq:invZ4}, and then,
instead of the function $\chi_{L;\sg_\ell,b_\ell,\om_\ell}$ defining the sinh-deformation and sinh-change of variables, we
use the function
\bbe\label{chi_log}
\chi_{\mathrm{log}; \sg_\ell, A}(y)=\sg_\ell+iy\ln(A+y^2),
\ee
where $A>1$, and the contour (vertical line) $\cL_{\mathrm{log}; \sg_\ell, A}=\chi_{\mathrm{log}; \sg_\ell, A}(\bR)$.
Making the change of variables $z=z(y):=\chi_{\mathrm{log}; \sg_\ell, A}(y)$, we obtain
\bbe\label{eq:invZ7}
u_n=\int_{\bR}\frac{1}{2\pi}(\tu(z(y))+(-1)^n\tu(-z(y)))z(y)^{-n-1}\left(\ln(A+y^2)+\frac{2y^2}{A+y^2}\right)dy.
\ee
Denote by $f_n$ the integrand on the RHS of \eq{eq:invZ7}. The restrictions on $d_\ell$, the half-width of a strip of analyticity
of $f_n$ around $\bR$, are 
$
a_-<\sg_\ell-d_\ell\ln (A-d_\ell^2), \sg_\ell+d_\ell\ln (A-d_\ell^2)<a_+,
$
and $\chi_{\mathrm{log}; \sg_\ell, A}(S_{(-d_\ell,d_\ell)})\subset \cU^{\mathrm{log}}(a_-,a_+,\al)$. It is easy to see
that if $R$ is sufficiently large and $\sg_\ell+d_\ell\ln (A-d_\ell^2)<a_+$, then 
$\chi_{\mathrm{log}; \sg_\ell, A}(S_{(-d_\ell,d_\ell)})\cap\{z\ |\ (|z|>R) \vee (|\Im\, z|\le 1/R)\}\subset \cU^{\mathrm{log}}(a_-,a_+,\al)\cap\{z\ |\ (|z|>R) \vee (|\Im\, z|\le 1/R)\}$. 
The complete verification is rather involved but, evidently,  
$\chi_{\mathrm{log}; \sg_\ell, A}(S_{(-d_\ell,d_\ell)})
\subset \cU^{\mathrm{log}}(a_-,a_+,\al)$ 
if $d_\ell>0$ is sufficiently small. 
We are interested in the case of $r_\pm$ close to 1, therefore, we choose 
$A$ so that $(A-1)/(r_+-r_-)>>1$,
$\sg_\ell=(r_++r_-)/2$, and find $d_\ell$ from
$\sg_\ell+d_\ell\ln (A-d_\ell^2)=r_+$. Then, as $r_+-r_-\to 0$, $d_\ell\sim (r_+-r_-)/(2\ln A)$. Since the discretization error of the infinite trapezoid rule decreases as $d_\ell$ increases, we choose $A$ close to 1, e.g., $A=1+(r_+-r_-)^{1/4}$.
Then $d_\ell$ is of the order of $(r_+-r_-)^{3/4}$. The integrand decays as $(y\ln y)^{m_\tu-n}y^{-1}$, therefore, if $n>>m_\tu$, the truncations parameter $\La$ is not large. 
The q-Hardy norm is of the order of $r_-^{-n}$, as the Hardy norm in the case of the trapezoid rule is, therefore, given the error tolerance,
the number of terms of the simplified trapezoid rule is, approximately, $(r_+-r_-)^{-1/4}$ smaller than the number of terms
of the trapezoid rule.

\begin{example}\label{LOG_1}{\rm Let $M(z)=e^{\Psi(z)}$, where $\Psi(z)=\mu z+\de(\la^\nu-(\la^2-z^2)^{\nu/2})$, $\nu\in (0,1)$, $\la>1$, $\de>0$, and $\mu\neq 0$. Then Condition $Z$-SINH3$(a_-,a_+;\al)$ fails but $Z$-LOG$(a,b;\ga)$ is satisfied with 
any $\al>0$, $m'_\tu=|\mu|$, $m_\tu=0$. 
$\mu_{100}=6.16741619667409E-05$, the error tolerance smaller than E-15 is achieved using the trapezoid rule and log-acceleration with 900 and 102 nodes, the CPU times are 221 and 47 microsec., respectively, the average over 100,000 runs.

The results are similar for  $M(z)$  as in Example \ref{ex:one-sidedKBL-moment} but with negative drift $\mu<0$.}
\end{example}

\begin{rem}\label{rem:log}{\rm The map \eq{chi_log} and corresponding change of variables is a special case
of a more general class of the log-acceleration family of deformations in \cite{iFT,ConfAccelerationStable}. In the current setting, it can be advantageous to use contours deformed into the right half-plane.
}
\end{rem}

\section{Inverse $Z$-transform, Wiener-Hopf factorization of functions on $\bT$,
and construction of causal filters}\label{s:ZWHFfilter}
Consider the problem of calculation of the impulse response $h[n], n=0,1,\ldots,$ $h[n]\in \bR$, of a linear translation invariant filter
given the power spectral density $PSD(z)$. For $z\in \bT$, $PSD(z)=H(z)H(1/z)$, where 
$
H(z)=\sum_{n=0}^\infty h[n]z^{-n}$ is the transfer function of the filter. In order to apply the results of the preceding sections,
we need to impose conditions on $PSD(z)$ which ensure that $\tu(z):=H(1/z)$ satisfies one of the conditions $Z$-SINH1 - $Z$-SINH3 or $Z$-LOG. We formulate conditions on $PSD(z)$, which allows us to prove that
$\tu(z)$ satisfies Condition $Z$-SINH3, and apply SINH-III algorithm to calculate the impulse response. 
The Wiener-Hopf factorization can be done using the trapezoid rule or, more efficiently, the sinh-acceleration.

\vskip0.1cm
\noindent
{\sc Condition WHF-SINH3$(a;\ga;m_+,m_-)$.}  
  There exist $0<\ga\le \pi/2$, $a>1$, $c_{\infty}>0$, $m_\pm\in \bR$ and $\de\in (0,1]$ such that
  \begin{enumerate}[(1)]
  \item
  $PSD(z)>0, z\in \bT$;
  \item
  $PSD$ is analytic in 
$ \cU_{WHF}(a,\ga):= \cU^{\mathrm{sym}}(-a,a,\ga)\cup\{z\ |\ 1/z\in \cU^{\mathrm{sym}}(-a,a,\ga)\};$
 \item
for any $z\in \cU_{WHF}(a,\ga)$, $PSD(1/z)=PSD(z)$ and
 $PSD(z)\not\in (-\infty,0]$;
 \item
  as $(\cU_{WHF}(a,\ga)\cap\{\pm\Re\, z>0\}\ni) z\to \infty$, $PSD(z)$ admits the representation
 \bbe\label{T(z)p}
 PSD(z)=c_{\infty} e^{\mp i\pi m_\pm}|z|^{m_++m_-}(1+B_\infty(z)), 
 \ee
 where $B_\infty(z)$ admits the following bound: for any $1<r<a$ and $\ga'\in (0,\ga)$,
  \bbe\label{eq:main_bound_WHFinf}
|B_\infty(z)|\le C(r;\ga')|z|^{-\de};
\ee
the constant $C(r;\ga')$ depends on $r,\ga'$ but not on $z\in \cU_{WHF}(a,\ga)\cap\{z\ |\  |z|>1/r\}$;
\item
 as $(\cU_{WHF}(a,\ga\cap\{\pm\Re\, z>0\})\ni)z\to 0$, $PSD(z)$ admits the representation
 \bbe\label{T(z)m}
 PSD(z)=c_{\infty} e^{\mp i\pi m_\pm}|z|^{-(m_++m_-)}(1+B_0(z)), 
 \ee
 where $B_0(z)$ admits the following bound: for any $1<r<a$ and $\ga'\in (0,\ga)$,
  \bbe\label{eq:main_bound_WHF0}
|B_0(z)|\le C(r;\ga')|z|^{\de};
\ee 
the constant $C(r;\ga')$ depends on $r,\ga'$ but not on $z\in\cU_{WHF}(a,\ga)\cap\{z\ |\ |z|\le r\}$.
\begin{example}\label{ex_filtering_1}{\rm 
$PSD(z)=(a_+-z)^{m_+}(a_+-1/z)^{m_+}(a_-+z)^{m_-}(a_-+1/z)^{m_-}$, where $a_\pm>0$, satisfies
Condition WHF-SINH3$(a;\pi/2;m_+,m_-)$ with $a=\min\{a_+,a_-\}$, $c_\infty=a^{m_++m_-}$ and $\de=1$.
}
\end{example} 
\end{enumerate}
\subsection{Wiener-Hopf factorization}\label{ss:WHF-filtering}
Conditions (1)-(5) allow us to simplify the standard construction of the Wiener-Hopf factors and derive 
modifications of the formulas for the Wiener-Hopf factors amenable for efficient calculations. Introduce functions
\beqa\label{def_A}
A(z)&=&\frac{a^{m_++m_-}}{c_\infty (a-z)^{m_+}(a-1/z)^{m_+}(a+z)^{m_-}(a+1/z)^{m_-}}PSD(z),\\
\label{def_Apm}
A_\pm(z)&=&\exp\left[\pm \frac{1}{2\pi i}\int_{|z'|=r^{\pm 1}}\frac{\ln A(z')}{z-z'}dz'\right],
\eqa
and define constants
\bbe\label{def_d}
d=-\frac{1}{2\pi i}\int_{|z'|=1}\frac{\ln A(z')}{z'}dz'
\ee
and $c_\pm=c_\infty^{1/2}a^{-(m_++m_-)/2}e^{\pm d/2}$. Then introduce
\beqa\label{def_Hp}
H_+(z)&=&c_+(a-z)^{m_+}(a+z)^{m_-}A_+(z), \\\label{def_Hm}
 H_-(z)&=&c_-(a-1/z)^{m_+}(a+1/z)^{m_-}A_-(z).
\eqa
Let  the curves $\cL_{L;\sg_+,b_+,\om_+}$ and $-\cL_{L;\sg_-,b_-,\om_-}$ be subsets of $\cU_{WHF}(a,\ga)\cap\{z\ |\ \Re\, z\ \pm 0\}$. On the former curve, the direction is up, on the latter - down. On each of the curves
 $1/\cL_{L;\sg_\pm ,b_\pm ,\om_\pm}=\{z\ |\ 1/z\in 1/\cL_{L;\sg_\pm,b_\pm,\om_+}\}$ and 
 $-1/\cL_{L;\sg_\pm ,b_\pm ,\om_\pm}$, the direction is anti-clockwise. 
\begin{thm}\label{thm:Apm_analcont}\begin{enumerate}[(a)]
\item
$A_\pm$ and $H_\pm$ admit analytic continuation to $\cU_{WHF}(a,\ga)$;
\item  
for $z\in \cU_{WHF}(a,\ga)$,
\bbe\label{WHF_disc}
H_+(z)H_-(z)=PSD(z);
\ee
 \item
for any $z\in \cU_{WHF}(a,\ga)\cup\{|z|<a\}$ between $\cL_{\sg_+,b_+,\om_+}$ and $-\cL_{\sg_-,b_-,\om_-}$,
\bbe\label{WHFp}
\ln A_+ (z)= -\frac{1}{2\pi i}\left(\int_{\cL_{L;\sg_+,b_+,\om_+}}+\int_{-\cL_{L;\sg_-,b_-,\om_-}}\right)\frac{\ln A(z')dz'}{z-z'},
\ee
and
 for any $z\in \cU_{WHF}(a,\ga)\cup\{|z|>1/a\}$ in the exterior of the union of the regions bounded by $1/\cL_{L;\sg_+,b_+,\om_+}$ and $-1/\cL_{L;\sg_-,b_-,\om_-}$, 
\beqa\label{WHFm}
\ln A_- (z)&=& \frac{1}{2\pi i}\left(\int_{1/\cL_{L;\sg_+,b_+,\om_+}}+\int_{-1/\cL_{L;\sg_-,b_-,\om_-}}\right)\frac{\ln A(z')dz'}{z-z'};
\eqa
\item  
for $z\in \cU_{WHF}(a,\ga)$, $H_-(1/z)=H_+(z)$;

\item
for any $1<r<a$, $\ga'\in (0,\ga)$, and $\de'\in (0,\de)$, 
\bbe\label{Apm}
A_\pm(z)-1=O((|z|+1/|z|)^{-\de'}), \quad (\cU_{WHF}(r,\ga')\ni)
z\to \{0,\infty\}. 
\ee
\end{enumerate}
\end{thm}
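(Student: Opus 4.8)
The plan is to rest the entire proof on a preliminary analysis of the auxiliary function $A$ in \eq{def_A}. First I would show that $A$ is analytic and zero-free on $\cU_{WHF}(a,\ga)$, enjoys the symmetry $A(1/z)=A(z)$, and tends to $1$ as $z\to 0$ and as $z\to\infty$ within $\cU_{WHF}(a,\ga)$. Analyticity and the absence of zeros come from condition (2) together with the fact that the branch points $\pm a,\pm 1/a$ of the four algebraic prefactors lie outside the open region $\cU_{WHF}(a,\ga)$; the symmetry is immediate from $PSD(1/z)=PSD(z)$ (condition (3)) and the manifest $z\mapsto 1/z$ invariance of the product $(a-z)^{m_+}(a-1/z)^{m_+}(a+z)^{m_-}(a+1/z)^{m_-}$. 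The decay to $1$ is where the phase factors $e^{\mp i\pi m_\pm}$ and the powers in \eq{T(z)p} and \eq{T(z)m} are used: in each half-plane $\pm\Re\,z>0$ the leading asymptotics of the algebraic prefactor match $c_\infty e^{\mp i\pi m_\pm}|z|^{m_++m_-}$ exactly, so these factors cancel against $PSD$ and leave $A(z)=1+O(|z|^{-\de})$ by \eq{eq:main_bound_WHFinf}, and symmetrically $A(z)=1+O(|z|^{\de})$ at $0$ by \eq{eq:main_bound_WHF0}. Consequently $A$ takes values in a neighbourhood of $1$ avoiding $(-\infty,0]$, a single-valued analytic branch of $\ln A$ exists on $\cU_{WHF}(a,\ga)$, and its Laurent coefficients $\hat c_k$ on $\bT$ satisfy $\hat c_{-k}=\hat c_k$ by the symmetry; note that $d$ in \eq{def_d} equals $-\hat c_0$.

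Parts (a) and (c) I would prove together. The circle integrals in \eq{def_Apm} define $A_+$ only for $|z|<r$ and $A_-$ only for $|z|>1/r$; to continue them to all of $\cU_{WHF}(a,\ga)$ I would deform the contours $|z'|=r^{\pm 1}$ into the unbounded sinh-contours of \theor{thm:sinhIII}, obtaining \eq{WHFp}--\eq{WHFm}. This deformation is legitimate because $\ln A$ is analytic throughout $\cU_{WHF}(a,\ga)$ and decays as a power of $|z'|$ at $0$ and $\infty$ (step one), so the arcs at infinity drop out and the integrals over the sinh-contours converge. Once $A_\pm$ are written as integrals over contours that do not separate the evaluation point $z$ from the region of interest, the right-hand sides of \eq{WHFp}--\eq{WHFm} are analytic in $z$ on $\cU_{WHF}(a,\ga)$, which is assertion (a) for $A_\pm$, and the analyticity of the prefactors in \eq{def_Hp}--\eq{def_Hm} extends it to $H_\pm$. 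I expect this continuation to be the main obstacle: one must check that $\pm\cL_{L;\sg_\pm,b_\pm,\om_\pm}$ and their images under $z\mapsto 1/z$ remain inside the analyticity domain while correctly enclosing or excluding $z$, so that the deformation picks up no spurious residue and the Plemelj jump across $\bT$ reproduces exactly the additive splitting $\ln A=\ln A_++\ln A_-$.

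Part (b) is then an identity-theorem argument. In the annulus $1/r<|z|<r$ the exponentiated additive splitting gives the multiplicative factorization $A(z)=A_+(z)A_-(z)$; since both sides are analytic on the connected set $\cU_{WHF}(a,\ga)$ and agree on an open subset, they agree throughout. Substituting $A_+A_-=A$ and the definitions \eq{def_Hp}--\eq{def_Hm} into $H_+H_-$, and using $c_+c_-=c_\infty a^{-(m_++m_-)}$ together with \eq{def_A}, the prefactors and constants cancel and leave exactly $PSD(z)$, i.e.\ \eq{WHF_disc}. For part (d) I would exploit the symmetry of the Laurent coefficients: $A(1/z)=A(z)$ forces $\ln A_-(1/z)=\ln A_+(z)-\hat c_0$, hence $A_-(1/z)=e^{d}A_+(z)$; combined with the $z\mapsto 1/z$ invariance of the prefactor product and the relation $c_+/c_-=e^{d}$ built into $c_\pm$, this yields $H_-(1/z)=H_+(z)$. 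This is precisely the purpose of the constant $d$ and of its symmetric distribution between $c_+$ and $c_-$.

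Finally, part (e) is a direct estimate of the deformed integrals of part (c). Since $\ln A(z')=O((|z'|+1/|z'|)^{-\de})$ along the contours, the Cauchy kernel $1/(z-z')$ supplies an additional decaying factor as $z\to\infty$ for $A_+$ and as $z\to 0$ for $A_-$, and the companion limits transfer through the symmetry established in (d); choosing $\de'<\de$ absorbs the mild growth of the prefactors and the contour geometry, which gives the bound \eq{Apm}.
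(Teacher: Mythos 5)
Your proposal is correct and, in parts (b), (c) and (e), essentially coincides with the paper's proof: the same preliminary bound $\ln A(z)=O((|z|+1/|z|)^{-\de})$ at $0$ and $\infty$ (the paper's \eq{asAz}), the same Cauchy splitting on an annulus \eq{lnA} followed by analytic continuation for \eq{WHF_disc}, the same decay-justified deformation of the circles into the sinh-contours for \eq{WHFp}--\eq{WHFm}, and the same kernel estimate plus symmetry transfer for \eq{Apm}. You deviate in two local steps. For (a), the paper does not invoke the deformed representations at all: it continues $A_-$ into $\cU_{WHF}(a,\ga)\cap\{z\,|\,|z|\le 1/a\}$ simply by setting $A_-(z):=A(z)/A_+(z)$, using that $A_+=\exp(\cdots)$ is analytic and zero-free on $\{z\,|\,|z|<a\}$, and continues $A_+$ symmetrically; this is lighter than your route, which implicitly requires the extra (true, but unstated) observation that the sinh-contours can be pushed arbitrarily close to the boundary of $\cU_{WHF}(a,\ga)$, so that every point of the domain eventually lies between an admissible pair of contours, with the different representations agreeing by Cauchy's theorem --- in compensation, your continuation delivers (c) simultaneously. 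For (d), the paper changes variables $z\mapsto 1/z$, $z'\mapsto 1/z'$ in \eq{WHFm} and uses the partial-fraction identity $-1/((1/z-1/z')z'^2)=1/z'+1/(z-z')$ to obtain $\ln A_-(1/z)=\ln A_+(z)+d$ with $d$ given by \eq{ds}; your Laurent-coefficient argument ($\hat c_{-k}=\hat c_k$ and $d=-\hat c_0$) yields the same relation on the annulus and extends to $\cU_{WHF}(a,\ga)$ by the identity theorem, since that domain is connected and inversion-invariant --- an equivalent and arguably more transparent derivation, though the paper's computation has the side benefit of producing the deformed integral \eq{ds} for $d$ that is later exploited numerically in \eq{d_sinh}. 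Finally, your (e) transfers the limits at the companion endpoints ``through the symmetry established in (d)'', which is exactly the paper's closing ``by symmetry'' step; both arguments share the same looseness there (the relation of (d) in fact carries the constant $e^{\pm d}$ to the opposite endpoint), so on this point you are precisely at the paper's own level of rigor.
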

\begin{proof} (a) It suffices to consider $A_\pm$. Clearly, $A_+$ (resp., $A_-$) is analytic in $\{z\ |\ |z|<a\}$ (resp., $\{z\ |\ |z|>1/a\}$), and since $A(z)$ admits analytic continuation to $\cU_{WHF}(a,\ga)$, we can define analytic continuation of $A_-$ to $\cU_{WHF}(a,\ga)\cap \{z\ |\ |z|\le 1/a\}$ by $A_-(z)=A(z)/A_+(z)$. Analytic continuation of $A_+$ is by symmetry.

(b) For any $r\in (1,a)$, the function $A(z)$ is analytic in the closed
annulus $\overline{\cD(1/r,r)}$, therefore, by the Cauchy integral theorem, for any $z\in \cD(1/r,r)$,
\bbe\label{lnA}
\ln A(z)=-\frac{1}{2\pi i}\int_{|z'|=1/r}\frac{\ln A(z')}{z-z'}dz +\frac{1}{2\pi i}\int_{|z'|=r}\frac{\ln A(z')}{z-z'}dz.
\ee
Since $r\in (1,a)$ is arbitrary, $A(z)=A_+(z)A_-(z)$ for $z\in \cD(1/a,a)$.   On the strength of (a),
$A(z)=A_+(z)A_-(z)$ for $z\in \cU_{WHF}(a,\ga)$, and \eq{WHF_disc} follows.

(c) 
On the strength of  \eq{T(z)p}-\eq{eq:main_bound_WHF0}, for any $r\in (1,a), \ga'\in (0,\ga)$,
\bbe\label{asAz}
\ln A(z)=O((|z|+1/|z|)^{-\de}),\quad (\cU_{WHF}(r,\ga')\ni )z\to \{0,\infty\}.
\ee
It follows that we can deform the contour of integration $\{z\ |\ |z|=a, \Re\,z>0\}\cup
\{z\ |\ |z|=a, \Re\,z<0\}$ in the formula for $A_+(z)$
into the union of contours $\cL_{\sg_+,b_+,\om_+}$ and $-\cL_{\sg_-,b_-,\om_-}$, and obtain \eq{WHFp}.
The proof of \eq{WHFm} is by symmetry (use (b)).

(d) We make the changes of variables $z\mapsto 1/z$ and $z'\mapsto 1/z'$ 
in \eq{WHFm}. Since $A(1/z')=A(z')$ and 
$
-\frac{1}{(1/z-1/z')z'^2}=\frac{z}{(z-z')z'}=\frac{1}{z'}+\frac{1}{z-z'},
$
we derive  
\[
\ln A_- (1/z)=-\frac{1}{2\pi i}\left(\int_{\cL_{\sg_+,b_+,\om_+}}+\int_{-\cL_{\sg_-,b_-,\om_-}}\right)\frac{\ln A(z')dz'}{z-z'}+d=\ln A_+(z)+d,
\]
where
\bbe\label{ds}
d=-\frac{1}{2\pi i}\left(\int_{\cL_{\sg_+,b_+,\om_+}}+\int_{-\cL_{\sg_-,b_-,\om_-}}\right)\frac{\ln A(z')dz'}{z'}
=-\frac{1}{2\pi i}\int_{|z'|=1}\frac{\ln A(z')}{z'}dz'.
\ee
Hence, \[
H_-(1/z)=c_-(a-z)^{m_+}(a+z)^{m_-}e^dA_-(1/z)=c_+(a-z)^{m_+}(a+z)^{m_-}A_+(z)=H_+(z).
\]

(e) For $r\in (1,a)$ and $\ga'\in (0,\ga)$, we take $\om_\pm\in (-\ga,-\ga')$, and then choose $\sg_\pm,b_\pm$ so that the curves $\cL_{L;\sg_\pm,b_\pm,\om_\pm}$ are subsets of $\cU_{WHF}(a,\ga)$ and the right boundary of $\cU_{WHF}(r,\ga')$ is to the left of $\cL_{L;\sg_\pm,b_\pm,\om_\pm}$. Then $\cU_{WHF}(r,\ga')$
is sandwiched between $\cL_{L;\sg_+,b_+,\om_+}$ and $-\cL_{L;\sg_-,b_-,\om_-}$. Furthermore, since
$\om_\pm\in (\ga',\ga)$, the integrand in the formula \eq{WHFp} is bounded away from 0 by
$c(|z|+|z'|)$, where $c$ is independent of $z\in \cU_{WHF}(r,\ga')$ and 
$z'\in \cL_{L;\sg_+,b_+,\om_+}\cup(-\cL_{L;\sg_-,b_-,\om_-})$. Using \eq{asAz}, we obtain that for any $\de'\in (0,\de)$, the integrand on the RHS of \eq{WHFp} admits the bound via $C(1+|z|)^{-\de'}(1+|z'|)^{-1-\de+\de'}$,
where $C$ is independent of $z$ and $z'$. Hence, as $z\to\infty$, $A_+(z)=1+O(|z|^{-\de'})$, and in view of
\eq{asAz} and \eq{lnA}, $A_-(z)=1+O(|z|^{-\de'})$ as well. By symmetry, $A_\pm(z)=1+O(|z|^{\de'})$ as $z\to 0$.

\end{proof}
In the next subsection, we use \eq{WHFm} with $\sg_\pm=\sg, b_\pm=b, \om_\pm=\om$. Changing the variable
$z'= 1/\chi_{L;\sg,b,\om}(y)$  and $z'= -1/\chi_{L;\sg,b,\om}(y)$ in the first and second integral, respectively, and letting $\chi(y)=\chi_{L;\sg,b,\om}(y)$, we obtain
\bbe\label{WHFm2}
\ln A_- (z)=\frac{b}{2\pi }\int_{-\infty}^{+\infty}dy\, \cosh(i\om+y)\left[\frac{\ln A(\chi(y))}{z\chi(y)-1}-\frac{\ln A(-\chi(y))}{z\chi(y)+1}\right].
\ee

\subsection{Calculation of the impulse response function}\label{ss:calc_imp_reponse}
Let $a,r,\ga$ and $m_\pm$ be as above.  For $n\ge 0$ 
\[
h[n]=\frac{1}{2 \pi i}\int_{|z|=1/r}H_-(z)z^{n-1}dz=\frac{1}{2 \pi i}\int_{|z|=r}H_+(z)z^{-n-1}dz
=\frac{1}{2 \pi i}\int_{|z|=r}\frac{PSD(z)}{H_-(z)}z^{-n-1}dz.
\]
Set $\tu(z)=PSD(z)/H_-(z)$, deform the contour above using $Z$-SINH3 algorithm:
\bbe\label{hnsinh}
h[n]=\frac{1}{2\pi i}\int_{\cL_{L,\sg_\ell,b_\ell,\om_\ell}}(\tu(z)+(-1)^n\tu(-z))z^{-n-1}dz.
\ee
It follows from \eq{def_Hp}-\eq{def_Hm} and \eq{Apm} that $\tu$ satisfies Condition $Z$-SINH3$(1/a,a,\ga)$ with $m_\tu=m:=m_++m_-$,
hence, if $n>m$, the deformation is justified. We change the variable $z=\chi(y):=\chi_{L,\sg_\ell,b_\ell,\om_\ell}(y)$:
\bbe\label{hn}
h[n]=\frac{b_\ell}{2\pi}\int_{-\infty}^{+\infty}dy\, \cosh(i\om+y)(\tu(\chi(y))+(-1)^n\tu(-\chi(y)))\chi(y)^{-n-1},
\ee
and apply the simplified trapezoid rule:
\beqa\label{hn_simp_trap}
h[n]&\approx& \frac{b\ze}{2\pi}\sum_{ |j|\le N}\cosh(i\om+j\ze)
(\tu(\chi(j\ze))\chi(j\ze)^{-n-1}+\tu(-\chi(j\ze))(-1)^n\chi(j\ze)^{-n-1}).
\eqa
The parameters of the simplified trapezoid rule are chosen as in Section \ref{s:sinh3}. 

To calculate $\tu(\pm z)$, we need to evaluate $d$ and $A_-(z)$ for $z=\pm \chi(j\ze)$, $ |j|\le N_\ell$.
The RHS in the formula \eq{def_d} for $d$ can be calculated using the trapezoid rule
but if the annulus of analyticity is narrow: $a-1<<1$, then it is advantageous to use \eq{ds} and the sinh-acceleration
\bbe\label{d_sinh}
d=-\frac{b_\ell}{2\pi}\int_{-\infty}^{+\infty}dy\, \cosh(i\om+y)\frac{\ln(A(\chi(y)A(\chi(-y))}{\chi(y)}.
\ee
 To evaluate $A_-(z)$, we can
use either \eq{def_Apm} and the trapezoid rule or \eq{WHFm}, sinh-changes of variables and the simplified trapezoid rule.
The simplest choice of the deformations in \eq{WHFm} is to use the same parameter sets
$(\sg,b,\om)=(\sg_\pm,b_\pm,\om_\pm)=(\sg_\ell,b_\ell,\om_\ell)$. In order that 
$\cL_{L;\sg_\ell, b_\ell,\om_\ell}\cap(1/\cL_{L;\sg_\ell, b_\ell,\om_\ell})=\emptyset$, we set $\om=-\ga/2, d_\ell=k_d\ga/2$ and choose $\sg_\ell, b_\ell$ so that 
$1\le \sg_\ell-b_\ell(\om_\ell+d_\ell)<a$. We set $r_-=1$, take $r_+\in (1,a)$, and use the prescription
\eq{bell_sinh_L}-\eq{sgell_sinh_L} to choose $b_\ell$ and $\sg_\ell$. Applying the simplified trapezoid rule to \eq{WHFm2} we can use the same $\ze$ as in \eq{hn_simp_trap}.
However, if $n-m$ is large, it is necessary to use  
longer grids to satisfy the same error tolerance. The reason is that 
$S_1(y):=\cosh(i\om+y)/(2\chi(y))\to 1/2$ as $y\to\pm\infty$, hence, bounded away from 0, and 
\[
S_2(y,z):=\ln A(\chi(y))/(z\chi(y)-1)-\ln A(-\chi(y))/(z\chi(y)+1)=O(|z|^{-1}e^{-|y|(\de'+1)})
\]
as $|z|+|y|\to \infty$, where $\de'\in (0,1)$ can be chosen close (but not very close) to 1. Hence, the truncation parameter must be chosen using $1+\de'$ instead of $n-m$, and the resulting $N^1$ is larger than $N$. 
The absolute error of the evaluation of $\ln A_-(z)$ translates into the relative error of $h[n]$,
therefore, if $h[n]$ is small, then in \eq{WHFm2}, a larger step and smaller number of terms can be used. 
Thus, we use
\bbe\label{Amhnsimple}
\ln A_-(z)\approx \frac{\ze^1}{2\pi}\sum_{-N^1\le j\le N^1}S_1(j\ze^1)S_2(j\ze^1,z).
\ee

\subsection{Algorithm}\label{ss:algo_filtering} Let $PSD$ satisfy Condition WHF-SINH3$(a;\ga;m)$
with $a>1$, $\ga\in (0,\pi/2)$, $m_\pm\in \bR$ and $c_\infty>0$. 
Given $\eps$ and a finite subset $\vec n:=[n_-, n_-+1, n_-+2,\ldots, n_+]$, where $n_->m:=m_++m_-$,
the array $\vec h=[h[n_-+j-1]]_{j=1}^{n_+-n_-+1}$ is calculated as follows.
\begin{enumerate}[I.]
\item
Choose parameters $\sg_\ell, b_\ell, \om_\ell$ of the sinh-deformation.
\item
Choose parameters $\ze, N$ and $\ze^1, N^1$ for the simplifying trapezoid rule in
the inverse $Z$-SINH formula for $h[n]$ and the Wiener-Hopf factor $A_-$. For the former case,
use $n=n_+$ to derive a bound for the Hardy norm and $n=n_-$ to find the truncation parameter.
\item
Set  $\vec y=\ze*(-N:1:N)$,
 $\chi=\sg+i*b*\sinh(i*\om+\vec y)$, $\vec{\mathrm{der}}=b*\cosh(i*\om+\vec y)$,
 $\vec y^1=\ze^1*(-N^1:1:N^1)$, 
$\chi^1=\sg^1+i*b^1*\sinh(i*\om^1+\vec y^1)$, 
 $\vec{\mathrm{der^1}}=b*\cosh(i*\om+\vec y^1)$.
 
\item
Set $\ka=a^{m_++m_-}/c_\infty$, $PSD^\pm=PSD(\pm \chi)$, $PSD^{\pm,1}=PSD(\pm \chi^1)$,
\beqast
A^{\pm}&=&
\ka*(a\mp \chi).^{-m_+}.*(a\mp 1./\chi).^{-m_+}
.*(a\pm \chi).^{-m_-}.*(a \pm 1./\chi).^{-m_-}.*PSD^{\pm},\\
A^{\pm,1}&=&\ka*(a\mp \chi^1).^{-m_+}.*(a\mp 1./\chi^1).^{-m_+}
.*(a\pm\chi^1).^{-m_-}.*(a\pm 1./\chi^1).^{-m_-}.*PSD^{\pm,1}.
\eqast
\item
Calculate $d$ applying the trapezoid rule to  \eq{def_d} or simplified trapezoid rule to \eq{d_sinh}
\[
d=-(\ze^1/2/\pi)*\mathrm{sum}(\log (A^+.*A^-).*\vec{\mathrm{der^1}}./\chi^1).
\]
\item
Calculate $A^\pm_-:=A_-(\pm \chi)$: 
\beqast
S0&=&\mathrm{conj}(\chi^1)'*\chi,\ FR^\pm=1./(S0\pm 1);\\
A^+_-&=&\exp[(\ze^1/(2*\pi))*((\vec{\mathrm{der^1}}.*\log (A^{+,1})*FR^--(\vec{\mathrm{der^1}}.*\log (A^{-,1}))*FR^+)],\\
A^-_-&=&\exp[(\ze^1/(2*\pi))*((\vec{\mathrm{der^1}}.*\log (A^{-,1})*FR^--(\vec{\mathrm{der^1}}.*\log (A^{+,1}))*FR^+)].
\eqast
\item
Calculate $\tu^\pm:=\tu(\pm \chi)$:
\[
\tu^\pm=e^{d/2}c_\infty^{-1/2}*a^{m/2}PSD^\pm./A^\pm_-.*(a\mp 1./\chi).^{-m_+}.*(a\pm 1./\chi).^{-m_-}.
\]
\item
Calculate $h=[h[\vec n]]$:
\beqast
Znp&=&\mathrm{conj}(\chi)'.^{-\vec n -1},\ Znm=\mathrm{conj}(-\chi)'.^{-\vec n -1};\\
h&=&(\ze/(2*\pi))*((\vec{\mathrm{der}}.*\tu^+)*Znp-(\vec{\mathrm{der}}.*\tu^-)*Znm). 
\eqast
\end{enumerate} 
\begin{example}\label{Ex:filter1}{\rm Let $PSD(z)=H(z)H(1/z)$, where $H(z)=(a_+-1/z)^{m_+}(a_-+1/z).^{m_-}$, and $a_+=1.0001, a_-=1.00015$, $m_+=3$, and $m_-=-1$. The impulse response function decays very slowly.
We evaluate $h[n]$ for $n\in [100,400]$ using the explicit formula for $H(z)$ and the trapezoid rule with $N=800,001$ terms.
The maximal relative error is 8.21E-15; if we take larger numbers of terms, the accuracy decreases due to the rounding errors.
With  $N=80,001$, the maximal relative error is 8.15E-06. 
The algorithm in this section with $N=172$ and $N^1=237$ achieves accuracy 4.55E-15; the CPU time for the evaluation of all $h[n]$, $n\in [100,400]$, is 13.4 msec., the average over 10,000 runs.

If we use the Wiener-Hopf factorization and trapezoid rule to recover $H(z)$ and then apply the trapezoid rule, then the accuracy of
the trapezoid rule significantly decreases.

}
\end{example}

\begin{example}\label{Ex:filter2}{\rm In Example \ref{Ex:filter1}, $H(z)$ has only one singularity outside the unit disc, and the performance of the trapezoid rule and method of the paper can be significantly improved making appropriate changes of the variable.
 If we let $m_\pm=-1$, then $H(z)$ has poles at $z=\pm 1$, and the analytic properties of the integrand are worse. In the result,
 the maximal relative error of the trapezoid rule with $N=80,001$ terms is only 0.067, and with $N=800,001$ terms - 8.01E-11.
 The method of this section, with the same parameters of the numerical scheme as in Example \ref{Ex:filter1}, achieves
 the relative accuracy 1.97E-11 in 13.1 msec.
 }
 \end{example}
 
 \begin{example}\label{Ex:filter3}{\rm In Example \ref{Ex:filter2}, we take $a_+=1.00001$ and $a_-=1.000015$; the impulse response function decays extremely slowly, and the annulus of analyticity is very narrow. The trapezoid rule
with $N=800,001$ that uses the explicit formula for $H(z)$, produces the maximal relative error 0.67. The method of this section with $N=575$ and $N^1=626$,  achieves
 the relative accuracy 4.08E-10 in 26.4 msec.
 
 }
 \end{example}

\section{Conclusion}\label{s:concl}
In the paper, we applied the conformal deformations technique and simplified trapezoid rule to construct efficient versions
of the inverse $Z$-transform and Wiener-Hopf factorization of functions on the unit circle.
With one exception (LOG-acceleration), we used the sinh-acceleration, which is the most efficient if
the integrand $f(z)$ in the inverse $Z$-tranform formula decays as $z\to \infty$  in a symmetric sector with a not very small opening angle. If the angle is very small, then, after the sinh-change of variables is made, the strip of analyticity
of the new integrand is too narrow, and the step in the infinite trapezoid rule must be very small to satisfy even moderately small error tolerance. Hence, the number of terms is very large. In these cases, a seemingly less efficient
family of fractional-parabolic deformations is more efficient. The changes of variables are of the form
$\chi^+_{P;\sg, b, \be, p}=
(\sg+b(1+iy)^\be)^p$, where $\sg>-b$,  and $\chi^-_{P;\sg, b, \be, p}=
(\sg-b(1-iy)^\be)^p$, where $\sg>b$. In some cases, the sub-polynomial deformations more general than
\eq{chi_log} are necessary. See \cite{ConfAccelerationStable} for application of different families
of conformal deformations to evaluation of stable L\'evy distributions. 
As applications, we considered evaluation of high moments of probability distributions and 
impulse response functions given the power spectral density. In the latter case, we had to design an efficient
numerical algorithm for the calculation of the Wiener-Hopf factors. Numerical examples demonstrate that
if the impulse response function decays slowly at infinity, equivalently, the annulus of analyticity of the power spectral density is very narrow (the shocks 
are persistent), then the trapezoid rule is inefficient. The method of the paper admits the straightforward
modification to the case when the open annulus of analyticity does not exist, and the impulse response function decays slower than exponentially. In the definitions and constructions of the paper,
it is possible to pass to the limit as the annulus shrinks into a circle. The sinh-deformed curves degenerate into angles, and on each ray, an exponential change of the variables is made,
similarly to \cite{ConfAccelerationStable,EfficientStableLevyExtremum}.
The new variations of the inverse $Z$-transform suggested in the paper can be also used in
situations where the first version of sinh-acceleration was applied. Namely, in \cite{EfficientDiscExtremum,EfficientZsufficient}, we considered 
the  evaluation of large powers of bounded operators, solution of boundary problems for difference-integro-differential equations and evaluation of lookback and barrier options with the discrete monitoring. In the latter case, it is necessary that the deformations of the lines of integration in the formulas for the Wiener-Hopf factors for random walks and in the formula for the inverse $Z$-transform be in a certain agreement. 


\end{document}